\title{Linearisation of the Travel Time Functional in Porous Media Flows}
\author{Paul Houston\thanks{
School of Mathematical Sciences, University of Nottingham,
University Park, Nottingham NG7 2RD, UK ({\tt
Paul.Houston@nottingham.ac.uk}, {\tt
Connor.Rourke@nottingham.ac.uk}, {\tt
KG.vanderZee@nottingham.ac.uk}). The research by KvdZ was supported by the EPSRC under Grants EP/T005157/1 and EP/W010011/1.}
\and 
Connor J. Rourke$^{*}$
\and 
Kristoffer G. van der Zee$^{*}$
}
\begin{document}

% Title.
\maketitle

% Abstract.
\begin{abstract}
The travel time functional measures the time taken for a particle trajectory to travel from a given initial position to the boundary of the domain. Such evaluation is paramount in the post-closure safety assessment of deep geological storage facilities for radioactive waste where leaked, non--sorbing, solutes can be transported to the surface of the site by the surrounding groundwater. The accurate simulation of this transport can be attained using standard dual-weighted-residual techniques to derive goal-oriented \textit{a posteriori} error bounds. This work provides a key aspect in obtaining a suitable error estimate for the travel time functional: the evaluation of its Gâteaux derivative. A mixed finite element method is implemented to approximate Darcy's equations and numerical experiments are presented to test the performance of the proposed error estimator. In particular, we consider a test case inspired by the Sellafield site located in Cumbria, in the UK. 
\end{abstract}

% Keywords.
\begin{keywords} 
  Mixed finite element methods, goal–oriented {\em a posteriori} error estimation, porous media flows, travel time functional, Gâteaux derivative, mesh adaptivity, linearised adjoint problem.
\end{keywords}

% AMS.
\begin{AMS}
  65N50
\end{AMS}

% --- NEW SECTION --- %
 
\section{Introduction}
Over the last few decades, control of the discretisation error generated by the numerical approximation of partial differential equations (PDEs) has witnessed significant advances due to contributions in \textit{a posteriori} errror analysis and the use of adaptive mesh refinement techniques. Such algorithms aim to save computational resources by refining only a certain subset of elements, making up part of the underlying mesh, that contribute most to the error in some sense. In particular, we refer to the early works \cite{AINSWORTH19971, ivo2001finite, bangerth2003adaptive}, and the references cited therein.

Typically, in applications we are not concerned with pointwise accuracy of the numerical solution of PDEs themselves, but rather quantities involving the solution (which we will refer to as being goal quantities, or quantities of interest); in this setting goal-oriented techniques are employed to bound the error in the given quantity of interest. Work in this area was first pioneered by \cite{Becker96afeed-back, becker_rannacher_2001} and \cite{giles_suli_2002}, which established the general framework \cite{ODEN2001735, PRUDHOMME1999313} of the dual, or adjoint, weighted-residual method (DWR). When the quantity of interest is represented by a nonlinear functional, a linearisation about the numerical solution is employed in order for the problem to become tractable and computable; hence, the nonlinear functional must be differentiated. Solving a discrete version of this linearised adjoint problem allows for an estimate of the discretisation error induced by the quantity of interest, which may be localised further to drive adaptive refinement algorithms. Unweighted, residual-based estimates can be derived based on employing certain stability estimates \cite{eriksson_estep_hansbo_johnson_1995}, but this results in meshes independent of the choice of quantity of interest. The DWR approach has been applied to a vast number of different applications including the Poisson problem \cite{Becker96afeed-back}, nonlinear hyperbolic conservation laws \cite{hartmann_houston}, fluid-structure interaction problems \cite{VANDERZEE20112738}, application to Boltzmann-type equations \cite{hoitinga_van_brum}, as well as criticality problems in neutron transport applications \cite{NeutronTransport}.

In this paper, our motivation is in the post-closure safety assessment of facilities intended for use as deep geological storage of high-level radioactive waste \cite{cliffe_collis_houston, NDA, NIREX, MCKEOWN1999231}. Here, we are solely interested in the time-of-flight for a non-sorbing solute (which has leaked from the repository) to make its way to the surface, or boundary, of the domain; this time is represented by the (nonlinear) travel time functional. Previously, work undertaken in \cite{cliffe_collis_houston} employed goal-oriented \textit{a posteriori} error estimation for this functional, relying on a finite-difference  approximation of its Gâteaux derivative. 

The work presented in this article derives an exact expression for the Gâteaux derivative of the travel time functional, based on employing a backwards-in-time initial-value-problem (IVP) considered adjoint to the trajectory of the leaked solute. The use of such linearisation allows for an easy implementation of the adjoint problem required for the goal-oriented error estimation of the travel time functional. In comparison with the previous approximate linearisation, in the case of a lowest--order approximation for the driving velocity field, there is now no need for time--stepping techniques to evaluate the derivative of the travel time functional, which are often slow and computationally expensive.

Before we proceed, we first introduce the travel time functional for generic velocity fields; in addition a preliminary version of the main result of this work is presented: the Gâteaux derivative of the travel time functional for continuous velocity fields. Next we briefly discuss some of the literature relating to Darcy's equations as a model for groundwater flow, other potential models that could be used for more realistic simulations, and the \textit{a posteriori} error analysis that has been developed within these areas. Finally, we outline the contents of the rest of this article.

%A brief outline for the rest of this section is as follows. We will first introduce, and define, the travel time functional for generic velocity fields before presenting the classical DWR theory that will be employed within this article. After this, a preliminary version of the main result of this work is presented: the Gâteaux derivative of the travel time functional for continuous velocity fields. Lastly, some literature is discussed relating to Darcy's equations as a model for groundwater flow, other potential models that could be used for more realistic simulations, and the \textit{a posteriori} error analysis that has been developed within these areas.
%
%% --- NEW SUBSECTION --- %
%
\subsection{The Travel Time Functional}\label{prelims}
Within this section, we define the travel time functional for generic velocity fields and address briefly the difficulties involved with its linearisation. To this end, consider an open and bounded Lipschitz domain $\Omega\subset\mathbb{R}^d$, $d = 2,3$, with polygonal boundary $\partial\Omega = \Gamma$, and the semi-infinite time interval $\mathcal{I} = [0, \infty)$. Let us suppose we have a generic velocity field $\mathbf{u} = \mathbf{u}(\mathbf{x}, t) : \overline{\Omega}\times \mathcal{I}\rightarrow\mathbb{R}^d$. For a user-defined initial position $\mathbf{x}_0\in\Omega$, the particle trajectory $\mathbf{X} \equiv \mathbf{X_u}$, due to $\mathbf{u}$, is given by the solution of the following IVP:
\begin{align*}
	 \displaystyle\frac{d\mathbf{X}}{dt}(t) & = \mathbf{u}(\mathbf{X}(t), t) \;\;\;\; \forall t\in \mathcal{I},\\
	 \mathbf{X}(0) & = \mathbf{x}_0.
\end{align*}
The so-called travel time of the velocity field $\mathbf{u}$, which is defined to be the time-of-flight of the particle trajectory $\mathbf{X_u}$ from its initial position $\mathbf{x}_0$ to, if ever, its first exit point out of the domain $\Omega$. Thereby, the functional $T(\mathbf{u}; \mathbf{x}_0)$ is defined by
\begin{equation}\label{TT}
	T(\mathbf{u}; \mathbf{x}_0) = \inf\{t\in \mathcal{I} : \mathbf{X_u}(t)\not\in\Omega\}.
\end{equation}
Alternatively, we can write this in the equivalent form:
\begin{equation*}
	T(\mathbf{u}; \mathbf{x}_0) = \int_{P(\mathbf{u}; \mathbf{x}_0)}\frac{dt}{\Vert\mathbf{u}\Vert_2},
\end{equation*}
where $\Vert\cdot\Vert_2$ denotes the standard Euclidean $2-$norm and $P(\mathbf{u}; \mathbf{x}_0)$ is the curve traced by the particle trajectory from its initial position to the first boundary contact:
\begin{equation*}
	P(\mathbf{u}; \mathbf{x}_0) = \{\mathbf{X_u}(t)\in\overline{\Omega} : t\in [0, T(\mathbf{u}; \mathbf{x}_0)]\}.
\end{equation*}
The integral version of the functional clearly highlights the difficulty concerning the demonstration of its differentiability. Indeed, the nonlinearity occurs within the integrand and the curve in which the integral is taken over depends itself on the velocity field. The travel time functional cannot clearly be globally continuous and therefore not globally Fréchet differentiable. We shall see, however, that it is possible to evaluate its Gâteaux derivative (Theorem \ref{mainresult}). The regularity of the functional itself will not be addressed within this work.

Additionally, evaluating the travel time functional itself involves the computation of the velocity streamlines, or particle trajectories $\mathbf{X_u}(t)$. Within this work, we follow the techniques outlined in \cite{KAASSCHIETER1995277} for streamline computation; furthermore, a streamfunction
approach can indeed be employed when the considered fluid flow approximations are divergence-free
\cite{MATRINGE2006992}, and it is even possible for high-order velocity approximations, when also divergence-free, to
have accurate streamline tracing \cite{juanes_matringe}.
\subsection{Linearisation in the Continuous Case}
A preliminary result for the linearisation of the travel time functional involves assuming that the velocity field $\mathbf{u}$ satisfying the underlying flow problem is continuous on $\Omega$. When this is the case, then the Gâteaux derivative of the travel time functional may be evaluated and computed as an integral, in time, weighted by a variable $\mathbf{Z}$ which may be considered as being \textit{adjoint} to the particle trajectory $\mathbf{X_u}$. The below theorem presents such a preliminary version of the main result of this paper. Here, for a sufficiently smooth functional ${\cal Q}:V\rightarrow {\mathbb R}$, we use the notation ${\cal Q}'[w](\cdot)$ to denote the G\^{a}teaux derivative of ${\cal Q}(\cdot)$ evaluated at some $w$ in $V$, where $V$ is some suitably chosen function space.
\begin{theorem}\label{preMainResult}
	Suppose that the velocity field $\mathbf{u}(\mathbf{x}, t)$ is continuous on $\Omega$. Let $\mathbf{n} = \mathbf{n}(\mathbf{x})$ be the unit outward normal vector to $\Gamma$. Assume $\Gamma$ is locally flat at the exit point $\mathbf{X}_{\mathbf{u}}(T(\mathbf{u}; \mathbf{x}_0))$, and that the particle trajectory does not exit the domain parallel to the boundary. Let $\mathbf{Z}$ solve the IVP:
	\begin{align*}
		-\frac{d\mathbf{Z}}{dt}(t) - [\nabla\mathbf{u}(\mathbf{X}(t), t)]^\top\mathbf{Z}(t) & = \mathbf{0}\;\;\;\;\forall t\in[0, T(\mathbf{u}; \mathbf{x}_0)),\\
		\mathbf{Z}(T(\mathbf{u}; \mathbf{x}_0)) & = -\frac{\mathbf{n}}{\mathbf{u}(\mathbf{X}(T(\mathbf{u}; \mathbf{x}_0)), T(\mathbf{u}; \mathbf{x}_0))\cdot\mathbf{n}}.
	\end{align*}
Then, the G\^{a}teaux derivative of the travel time functional may be evaluated as
\begin{equation*}
	T'[\mathbf{u}](\mathbf{v}) = \int_0^{T(\mathbf{u}; \mathbf{x}_0)}\mathbf{Z}(t)\cdot\mathbf{v}(\mathbf{X}(t), t)\,dt.
\end{equation*}
\end{theorem}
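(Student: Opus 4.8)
The plan is to compute the Gâteaux derivative directly from its definition by perturbing the velocity field to $\mathbf{u}_\epsilon = \mathbf{u} + \epsilon\mathbf{v}$, tracking how both the trajectory and its exit time respond to first order, and then collapsing the resulting endpoint expression into the stated integral by means of the adjoint variable $\mathbf{Z}$. First I would introduce the perturbed trajectory $\mathbf{X}_\epsilon$ solving the IVP driven by $\mathbf{u}_\epsilon$, together with its sensitivity $\mathbf{Y}(t) := \partial_\epsilon \mathbf{X}_\epsilon(t)|_{\epsilon=0}$. Differentiating the trajectory equation with respect to $\epsilon$ at $\epsilon = 0$ yields the linearised (variational) IVP
\[
	\frac{d\mathbf{Y}}{dt}(t) = \nabla\mathbf{u}(\mathbf{X}(t),t)\,\mathbf{Y}(t) + \mathbf{v}(\mathbf{X}(t),t), \qquad \mathbf{Y}(0) = \mathbf{0},
\]
which governs the first-order response of the path to the perturbation.

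Second, I would linearise the exit condition, which is where the moving endpoint is handled. Writing $T_\epsilon := T(\mathbf{u}_\epsilon;\mathbf{x}_0)$ and $T := T(\mathbf{u};\mathbf{x}_0)$, the local flatness of $\Gamma$ near the exit point lets me encode the boundary as a level set $\mathbf{n}\cdot(\mathbf{x} - \mathbf{x}_b) = 0$, so the defining relation $\mathbf{n}\cdot(\mathbf{X}_\epsilon(T_\epsilon) - \mathbf{x}_b) = 0$ holds for all small $\epsilon$. Differentiating this identity in $\epsilon$ and using $\frac{d\mathbf{X}}{dt}(T) = \mathbf{u}(\mathbf{X}(T),T)$ gives
\[
	\mathbf{n}\cdot\bigl(\mathbf{u}(\mathbf{X}(T),T)\,T'[\mathbf{u}](\mathbf{v}) + \mathbf{Y}(T)\bigr) = 0.
\]
Solving for the derivative, and noting that the non-parallel-exit hypothesis guarantees $\mathbf{u}(\mathbf{X}(T),T)\cdot\mathbf{n}\neq 0$ so that the quotient is well defined, produces $T'[\mathbf{u}](\mathbf{v}) = -(\mathbf{Y}(T)\cdot\mathbf{n})/(\mathbf{u}(\mathbf{X}(T),T)\cdot\mathbf{n}) = \mathbf{Y}(T)\cdot\mathbf{Z}(T)$, the final equality being precisely the terminal condition imposed on $\mathbf{Z}$.

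Third, I would eliminate the unknown endpoint value $\mathbf{Y}(T)$ in favour of an integral over the whole trajectory using the adjoint IVP. Computing $\frac{d}{dt}(\mathbf{Z}\cdot\mathbf{Y})$ and substituting the adjoint equation $\frac{d\mathbf{Z}}{dt} = -[\nabla\mathbf{u}]^\top\mathbf{Z}$ together with the variational equation for $\mathbf{Y}$, the two terms involving $\nabla\mathbf{u}$ cancel via the transpose identity $([\nabla\mathbf{u}]^\top\mathbf{Z})\cdot\mathbf{Y} = \mathbf{Z}\cdot(\nabla\mathbf{u}\,\mathbf{Y})$, leaving
\[
	\frac{d}{dt}\bigl(\mathbf{Z}(t)\cdot\mathbf{Y}(t)\bigr) = \mathbf{Z}(t)\cdot\mathbf{v}(\mathbf{X}(t),t).
\]
Integrating over $[0,T]$ and invoking $\mathbf{Y}(0)=\mathbf{0}$ collapses the boundary terms to $\mathbf{Z}(T)\cdot\mathbf{Y}(T)$, which combined with the previous step delivers the claimed formula.

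The main obstacle I anticipate is rigorously justifying that $T_\epsilon$ and $\mathbf{X}_\epsilon$ are genuinely differentiable in $\epsilon$ — that is, that the sensitivity $\mathbf{Y}$ and the scalar derivative $T'[\mathbf{u}](\mathbf{v})$ exist at all — since the clean implicit differentiation in the second step is only legitimate once the exit time is known to depend smoothly on $\epsilon$. This rests on an implicit function theorem argument applied to the exit relation, whose transversality condition is exactly the non-parallel-exit hypothesis $\mathbf{u}(\mathbf{X}(T),T)\cdot\mathbf{n}\neq 0$, with local flatness of $\Gamma$ ensuring the level-set representation is valid. One should also observe that, although the statement only assumes $\mathbf{u}$ continuous, the appearance of $\nabla\mathbf{u}$ in both the variational and adjoint equations tacitly demands that $\mathbf{u}$ be at least $C^1$ in space for the linearisation to be meaningful.
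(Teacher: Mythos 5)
Your proposal is correct and follows essentially the same route as the paper: the sensitivity $\mathbf{Y}$ is the paper's trajectory derivative $\mathbf{X}'$ (Lemma 3.2), the linearised exit condition is Lemma 3.3, and your computation of $\frac{d}{dt}(\mathbf{Z}\cdot\mathbf{Y})$ is the integration-by-parts/duality step in the proof of Theorem 3.1, specialised to the case with no jump terms (which is exactly how the paper says Theorem 1.1 is recovered). Your closing remarks on the implicit-function-theorem justification of differentiability and on the tacit $C^1$ requirement on $\mathbf{u}$ are apt, as the paper likewise leaves these regularity issues unaddressed.
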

The above result can be used to evaluate the derivative required for the implementation of DWR {\em a posteriori} error estimators, where here the velocity field $\mathbf{u}$ is replaced with its discrete approximation $\mathbf{u}_h$. However, such approximations are usually obtained via finite element methods, and the continuity of $\mathbf{u}_h$ at element interfaces is not always guaranteed. In this case, Theorem \ref{preMainResult} must be generalised to allow for such discontinuity; this is addressed as part of Section \ref{MainResultSec}, where Theorem \ref{mainresult} is derived without such a  continuity assumption. Moreover, Theorem \ref{mainresult} presents a more general result in which Theorem \ref{preMainResult} may be recovered easily by setting the resulting jump terms equal to zero.
%
%% --- NEW SUBSECTION --- %
%
\subsection{Related Literature}
Groundwater flow, governed by Darcy’s equations, represents a viable simplified model for the fluid flow \cite{MCKEOWN1999231, cliffe_collis_houston} and will be exploited within this paper. It is assumed that whilst the surrounding rocks may not be saturated while the repository is being built, that they will eventually become saturated in its operational lifetime; thus, it is sufficient that in a post-closure assessment we can consider saturated conditions, and therefore use the time independent Darcy’s equations as our model, rather than the usual Richards equations for capillary flow \cite[p. 3]{collis}. Of course, within this context and in many others, there are more sophisticated models, cf. \cite{Pal, ZHANG2016396, Popov, Martin_Jaffre_Roberts, BOON_SIAM, BOON_springer, FUMAGALLI2021110205, poly_methods, Milne, Berre} and the references cited therein, where large-scale structures and complex topographical features, such as fracture networks or vugs and caves, are considered as parts of the domain. The solution-based \textit{a posteriori} error estimation for these more sophisticated models may be found in, for example, the articles \cite{CHEN2014502, chen_sal, chen_sun, WILLIAMSON2019266, Hecht, Varela, MGHAZLI2019163} and the references cited therein.

An energy norm based approach can also be found in \cite{CAO1999681}, where adaptive mesh refinement is employed  to accurately compute streamlines via a streamfunction approach. More generally, the goal-oriented error estimation for linear functionals of Darcy’s equations can be found in \cite{MOZOLEVSKI2015127} which employs equilibrated-flux techniques in order to achieve a guaranteed bound. Furthermore, \cite{MALLIK2020112367} extends this work to bound higher-order terms to demonstrate that the \textit{a posteriori} bounds are asymptotically exact, as well as taking into account the error induced by inexact solvers.

For a set of slightly different homogenised problems, \cite{Carraro} presents the goal-oriented error estimation for general quantities of interest. We also point out the existing literature for goal-adaptivity in the context of contaminant transport, presented in the articles \cite{Bengzon, LARSONgoal}, but which differs slightly from the work presented here. For the numerical experiments presented in Section \ref{numexp}, for example, following \cite{MFEM}, we employ a mixed finite element method using the Brezzi-Douglas-Marini (BDM) elements. These elements, introduced originally in \cite{Brezzi}, ensure $H$(div)-conformity in order to retain physical results in the streamline computation: that is, ensuring the continuity of the normal traces of velocity fields across element interfaces. 
%As noted, for example, in \cite{Cordes}, there are problems associated with using the standard, nodal-based elements; most importantly is the attainment of unphysical streamlines, as well as there being a lack of mass conservation at an elemental level. These problems are eradicated when a mixed formulation is used instead, where the velocity and pressure solutions are computed simultaneously.

The original solution-based \textit{a posteriori} error analysis for Darcy's equations, employing Raviart--Thomas elements, was undertaken by Braess and Verfürth in \cite{Braess}; we also refer to \cite{Barrios,BARRIOS2015909} which consider augmented, stabilised versions of Darcy’s equations, whose original $L^2$-bound analysis was given in the article \cite{Larson}. Moreover, there is a vast literature for the \textit{a posteriori} error analysis for Darcy’s equations in a a variety of contexts. For example, \cite{Bernadi_Orfi, ORFI20192833} presents the analysis for time-dependent Darcy flow; \cite{de_pietro} uses the finite volume method for two-phase Darcy flow; and \cite{BARRIOS_Bustinza} uses an augmented discontinuous Galerkin method. For the (residual) norm-based \textit{a posteriori} error analysis for Darcy’s equations, and mixed finite element methods in general, we refer to the articles \cite{Voh1,Voh2} by Vohralík, and the references cited therein. In \cite{Verfurth}, similar to \cite{Carst}, residual-based \textit{a posteriori} error bounds are derived by considering a Helmholtz decomposition in order to overcome the need for a saturation assumption previously assumed by \cite{Braess}. Moreover, in \cite{AMANBEK2020112884} an enhanced velocity mixed finite element method is used instead.

Lastly, problems modelled by Darcy’s equations often lend themselves for investigation in the realm of uncertainty quantification; more specifically, in real-life there is uncertainty regarding the properties of the sub-surface rock making up the domain. While not the focus of this work, we refer to \cite{collis}, and the references cited therein, where substantial work has been undertaken in a random setting.
%
%% --- NEW SUBSECTION --- %
%
\subsection{Outline of the Paper}
%The outline of this article is as follows.
In Section \ref{modelsec} we introduce Darcy's equations for a simple model of saturated groundwater flow and their classical mixed formulation. Section \ref{MFEMsec} presents the numerical approximation of Darcy's equations via the mixed finite element method. The DWR method is presented in Section \ref{DWRsec}; here, an \textit{a posteriori} error estimate is stated and localised into element-wise indicators. Section~\ref{MainResultSec} represents the main contribution of this paper which is presented for piecewise discontinuous velocity fields. The remainder of this section proves the main linearisation result, given by Theorem \ref{mainresult}, for the travel time functional. Applying the linearisation result to groundwater flow and Darcy's equations is addressed in Section \ref{DarcyContextSec}, and the following Section \ref{ImplementationSec} provides some brief implementation details when the velocity field under consideration is piecewise linear. Three numerical experiments are conducted in Section \ref{numexp}: two simple, academic-style examples aim to build confidence in the proposed \textit{a posteriori} error estimate, while the last one adaptively simulates the leakage of radioactive waste within a domain inspired by the (albeit greatly simplified) Sellafield site, located in Cumbria, in the UK. This final, physically motivated example, matches the experiment conducted in \cite{cliffe_collis_houston} but uses the new linearisation result instead. Lastly, some concluding remarks are discussed in Section \ref{ConclusionsSec}.

% --- NEW SECTION --- %

\section{Darcy Flow, FE Approximation, and \textit{A Posteriori} Error Estimation}
\subsection{The Model for Groundwater Flow}\label{modelsec}
For illustrative purposes, a Darcy flow model is adopted in this paper in order to demonstrate the main Gâteaux derivative result (Theorem \ref{mainresult}) in the context of goal-oriented adaptivity. 
%That is, the accurate computation of the travel time for a non-sorbing point particle (representing buried nuclear waste), whose trajectory is governed by groundwater flow.
To this end, Darcy's equations are given by the following system of first-order PDEs, whereby we seek the \textit{Darcy velocity} $\mathbf{u}$ and \textit{hydraulic head (or pressure)} $p$ such that:
\begin{alignat}{2}
\label{DL}\mathbf{K}^{-1}\mathbf{u} + \nabla p & = \mathbf{0}\;\;\;\; && \forall\mathbf{x}\in\Omega,\\
\label{CM}\nabla\cdot\mathbf{u} & = f\;\;\;\; && \forall\mathbf{x}\in\Omega,\\
\label{Dbc} p & = g_D\;\;\;\; && \forall\mathbf{x}\in\partial\Omega_D,\\
\label{Nbc}\mathbf{u}\cdot\mathbf{n} & = 0\;\;\;\; && \forall\mathbf{x}\in\partial\Omega_N.
\end{alignat}
Here, $\Omega\subset\mathbb{R}^d$, $d=2,3$, is an open and bounded domain with polygonal boundary $\partial\Omega$, partitioned into so-called Dirichlet and Neumann parts ${\partial\Omega} = \overline{\partial\Omega}_D\cup\overline{\partial\Omega}_N$; the unit outward normal vector to the boundary is denoted by $\mathbf{n}$. Furthermore, $f\in L^2(\Omega)$ is a source/sink term and $g_D\in H^{\frac{1}{2}}(\partial\Omega_D)$ is Dirichlet boundary data for the pressure. Such regularity assumptions allow for the existence of a unique weak solution to Darcy's equations, discussed very briefly in Section \ref{formulationsec}. Lastly, the matrix $\mathbf{K}(\mathbf{x})\in\mathbb{R}^{d\times d}$ represents the hydraulic conductivity of the surrounding rock in the groundwater model; it is given by
$
\mathbf{K} := \nicefrac{\rho g}{\mu}\mathbf{k},
$
where $\rho$ is the density of water, $g$ is the acceleration due to gravity, $\mu$ is the viscosity of water, and $\mathbf{k}$ is the permeability of the surrounding rock. It is assumed that the eigenvalues of $\mathbf{K}$, $\lambda_{\pm}$ ($0 < \lambda_- \leq \lambda_+)$ satisfy
\begin{equation}\label{conductivityassumption}
\lambda_-\vert\mathbf{y}\vert^2 \leq \mathbf{y}^\top\mathbf{K}\mathbf{y} \leq \lambda_+\vert\mathbf{y}\vert^2 \;\;\;\; \forall\mathbf{x}\in\Omega\;\;\;\;\forall\mathbf{y}\in\mathbb{R}^d.
\end{equation}
In particular, the condition (\ref{conductivityassumption}) implies that $\mathbf{K}$ is invertible.

% --- NEW SUBSUBSECTION --- %

\subsubsection{Weak Formulation}\label{formulationsec}
Firstly, we introduce the following function spaces:
\begin{align*}
H(\text{div}, \Omega) & := \{\mathbf{v}\in [L^2(\Omega)]^d : \nabla\cdot\mathbf{v}\in L^2(\Omega)\},\\
H_{0, D}^1(\Omega) & := \{\psi\in H^1(\Omega) : \psi\vert_{\partial\Omega_D} = 0\},\\
H_{0, N}(\text{div}, \Omega) & := \{\mathbf{v}\in H(\text{div}, \Omega) : \langle\mathbf{v}\cdot\mathbf{n}, \psi\rangle_{\partial\Omega} = 0\;\;\forall\psi\in H_{0, D}^1(\Omega)\}.
\end{align*}
The space $H_{0, N}(\text{div}, \Omega)$ is a subspace of $H(\text{div}, \Omega)$ with vanishing normal-trace on the Neumann part of the boundary $\partial\Omega_N$. The duality pairing between $H^{-\frac{1}{2}}(\partial\Omega)$ and $H^{\frac{1}{2}}(\partial\Omega)$ is denoted by $\langle\cdot,\cdot\rangle_{\partial\Omega}$ and is given by the following Green's formula.
\begin{proposition}\label{IBP}
For $\mathbf{v}\in H(\textnormal{div}, \Omega)$,
\begin{equation*}
\langle\mathbf{v}\cdot\mathbf{n}, \psi\rangle_{\partial\Omega} = \int_\Omega \mathbf{v}\cdot\nabla\psi + \int_\Omega \psi\nabla\cdot\mathbf{v}\;\;\;\;\forall\psi\in H^1(\Omega).
\end{equation*}
\end{proposition}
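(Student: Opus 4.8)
The plan is to treat this as the standard Green's formula for $H(\text{div},\Omega)$, whose real substance is the \emph{well-definedness} of the normal-trace pairing $\langle\mathbf{v}\cdot\mathbf{n},\cdot\rangle_{\partial\Omega}$, rather than the identity itself, which then holds essentially by construction. I would proceed by first establishing the formula for smooth fields and then extending to all of $H(\text{div},\Omega)$ by density. For $\mathbf{v}\in[C^\infty(\overline{\Omega})]^d$ and $\psi\in H^1(\Omega)$, I would apply the classical divergence theorem to the product field $\psi\mathbf{v}$, using the Leibniz identity $\nabla\cdot(\psi\mathbf{v}) = \mathbf{v}\cdot\nabla\psi + \psi\,\nabla\cdot\mathbf{v}$. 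Since $\mathbf{v}\cdot\mathbf{n}$ is then a genuine $L^2(\partial\Omega)$ function, the boundary term is the honest surface integral $\int_{\partial\Omega}(\mathbf{v}\cdot\mathbf{n})\psi$, which coincides with $\langle\mathbf{v}\cdot\mathbf{n},\psi\rangle_{\partial\Omega}$, giving the formula in the smooth case.

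Next I would record the boundedness that drives both the extension and the well-definedness. Setting $\ell_{\mathbf{v}}(\psi) := \int_\Omega\mathbf{v}\cdot\nabla\psi + \int_\Omega\psi\,\nabla\cdot\mathbf{v}$, the Cauchy--Schwarz inequality gives $|\ell_{\mathbf{v}}(\psi)| \le \|\mathbf{v}\|_{H(\text{div},\Omega)}\,\|\psi\|_{H^1(\Omega)}$, so that $\ell_{\mathbf{v}}$ is a bounded linear functional on $H^1(\Omega)$ for every $\mathbf{v}\in H(\text{div},\Omega)$, and the map $\mathbf{v}\mapsto\ell_{\mathbf{v}}$ is continuous. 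Invoking the density of $[C^\infty(\overline{\Omega})]^d$ in $H(\text{div},\Omega)$, I would approximate a general $\mathbf{v}$ by smooth fields $\mathbf{v}_k\to\mathbf{v}$; the smooth identity holds for each $\mathbf{v}_k$, the right-hand sides converge to $\ell_{\mathbf{v}}(\psi)$ by the above estimate, and the left-hand sides converge once $\mathbf{v}\cdot\mathbf{n}$ is understood as the continuous extension of the classical normal trace to an operator $H(\text{div},\Omega)\to H^{-\frac12}(\partial\Omega)$.

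The main obstacle, and the only genuinely nontrivial point, is justifying that $\mathbf{v}\cdot\mathbf{n}$ is a well-defined element of $H^{-\frac12}(\partial\Omega)$, equivalently that $\ell_{\mathbf{v}}$ depends on $\psi$ only through its boundary trace. I would establish this by first observing that $\ell_{\mathbf{v}}(\phi)=0$ for every $\phi\in C_c^\infty(\Omega)$, which is exactly the definition of the weak divergence $\nabla\cdot\mathbf{v}\in L^2(\Omega)$; by the density of $C_c^\infty(\Omega)$ in $H_{0}^1(\Omega)$ together with the continuity of $\ell_{\mathbf{v}}$, it follows that $\ell_{\mathbf{v}}$ vanishes on all of $H_0^1(\Omega)$. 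Hence $\ell_{\mathbf{v}}$ factors through the trace, and appealing to the surjectivity of the trace operator $H^1(\Omega)\to H^{\frac12}(\partial\Omega)$ with a bounded right inverse (trace lifting) promotes it to a bounded functional on $H^{\frac12}(\partial\Omega)$. This functional is, by definition, the normal trace $\mathbf{v}\cdot\mathbf{n}\in H^{-\frac12}(\partial\Omega)$, and the asserted identity is then precisely the defining relation of the pairing $\langle\mathbf{v}\cdot\mathbf{n},\psi\rangle_{\partial\Omega}$, completing the argument.
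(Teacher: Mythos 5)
Your argument is correct and complete: the smooth-field case via the divergence theorem, the Cauchy--Schwarz bound, the density of smooth fields in $H(\text{div},\Omega)$, and the factorization of $\ell_{\mathbf{v}}$ through the trace operator together constitute the standard construction of the normal trace in $H^{-\frac12}(\partial\Omega)$. The paper itself offers no proof --- it states the proposition as the known Green's formula that \emph{defines} the duality pairing --- so your write-up simply supplies the standard justification that the paper implicitly relies on.
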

By multiplying (\ref{DL}) by a test function $\mathbf{v}\in H_{0, N}(\text{div}, \Omega)$ and (\ref{CM}) by a test function $q\in L^2(\Omega)$, and applying Proposition \ref{IBP} to the latter, we arrive at the saddle-point problem: find $(\mathbf{u}, p)\in \mathbf{H} := H_{0, N}(\text{div}, \Omega)\times L^2(\Omega)$ such that
\begin{alignat}{2}
\label{SP1}& a(\mathbf{u}, \mathbf{v}) + b(\mathbf{v}, p) && = G(\mathbf{v})\;\;\;\;\forall\mathbf{v}\in H_{0, N}(\text{div}, \Omega),\\
\label{SP2}& b(\mathbf{u}, q) && = F(q)\;\;\;\;\forall q\in L^2(\Omega).
\end{alignat}
The bilinear forms are given by
$a(\mathbf{u}, \mathbf{v}) := \int_\Omega \mathbf{K}^{-1}\mathbf{u}\cdot\mathbf{v}$, 
$b(\mathbf{v}, p) := -\int_\Omega p\nabla\cdot\mathbf{v}$,
and the linear functionals are defined as
$G(\mathbf{v}) := -\langle\mathbf{v}\cdot\mathbf{n}, g_D\rangle_{\partial\Omega}$, 
$F(q) := -\int_\Omega f q$.
For simplicity of presentation, we rewrite (\ref{SP1})--(\ref{SP2}) in the following compact manner: 
find $(\mathbf{u}, p)\in \mathbf{H}$ such that
\begin{equation}
\label{SP3}\mathscr{A}((\mathbf{u}, p), (\mathbf{v}, q)) = \mathscr{L}((\mathbf{v}, q))
~~~\forall (\mathbf{v}, q)\in \mathbf{H},
\end{equation}
where
\begin{align}
\label{bilin}\mathscr{A}((\mathbf{u}, p), (\mathbf{v}, q)) & := a(\mathbf{u}, \mathbf{v}) + b(\mathbf{u}, q) + b(\mathbf{v}, p),\\
\label{lin}\mathscr{L}((\mathbf{v}, q)) & := G(\mathbf{v}) + F(q).
\end{align}
Such a weak formulation admits a unique solution $(\mathbf{u}, p)\in \mathbf{H}$ according to standard theory (see \cite{MFEM}, for example). That is, since the functionals $G$ and $F$ are clearly continuous; the pair of solution-spaces satisfy the well known \textit{BNB, or inf-sup, compatibility condition}
\begin{equation*}
0 < \beta := \inf_{0\neq\varphi\in L^2(\Omega)}\sup_{\mathbf{0}\neq\mathbf{v}\in H_{0, N}(\text{div}, \Omega)}\frac{b(\mathbf{v}, \varphi)}{\Vert\mathbf{v}\Vert_{H(\text{div}, \Omega)}\Vert\varphi\Vert_{L^2(\Omega)}},
\end{equation*}
(as a result of the divergence operator $\mathfrak{B} : H_{0, N}(\text{div}, \Omega)\rightarrow L^2(\Omega)$ $(\mathbf{w}\mapsto\nabla\cdot\mathbf{w})$ being surjective); and the bilinear form $a(\cdot, \cdot)$ being coercive on the kernel of the divergence operator $\mathfrak{B}$. 
Indeed, the surjectivity of $\mathfrak{B}$ follows immediately from the application of the \textit{Lax--Milgram Lemma} to a standard Poisson problem, giving the unique existence of a $\varphi\in H^1(\Omega)$ such that
\begin{alignat*}{2}
-\Delta\varphi & = q && \;\;\;\; \forall\mathbf{x}\in\Omega,\\
\varphi  = 0  \;\;\;\; \forall\mathbf{x}\in\partial\Omega_D, ~~~
\nabla\varphi\cdot\mathbf{n} & = 0 && \;\;\;\; \forall\mathbf{x}\in\partial\Omega_N,
\end{alignat*}
for any $q\in L^2(\Omega)$; $\varphi$ admits the function $\mathbf{w} = -\nabla\varphi\in H_{0, N}(\textnormal{div}, \Omega)$ with $\nabla\cdot\mathbf{w} = q$.

% --- NEW SUBSECTION --- %

\subsection{Mixed Finite Element Approximation}\label{MFEMsec}
The numerical approximation of Darcy's equations employed in this paper will be based on a mixed finite element method. To this end, let $\mathscr{T}_h$ be a shape-regular simplicial partition of $\overline\Omega$ with $h$ the mesh-size parameter. We use the terminology \emph{face} to refer to a $(d-1)$-dimensional simplicial facet which forms part of the boundary of an element $\kappa\in\mathscr{T}_h$. 
Consider the finite-dimensional subspaces $\mathbf{V}_h\subset H_{0, N}(\text{div}, \Omega)$ and $\Pi_h\subset L^2(\Omega)$. To achieve such $H(\text{div}, \Omega)$-conformity is paramount; indeed, such approximations will have continuous normal-traces across element faces (for example, see \cite{MFEM}), allowing for the computation of physical streamlines, vital to real-life applications. Conversely, nodal-based elements should not be implemented since they often result in unphysical streamlines, as well as there being a lack of mass conservation at an elemental level \cite{Cordes}. Typically, such conformity is achieved by utilising the well known \textit{Raviart--Thomas} (RT) or \textit{Brezzi--Douglas--Marini} (BDM) finite elements. For the pressure space $\Pi_h$ we employ discontinuous piecewise-polynomial functions. However, we stress that any approximation spaces can be used as long as they are $H(\text{div}, \Omega)$ and $L^2(\Omega)$ conforming, respectively, and are a stable pair in the \textit{inf--sup} sense.
Hence, the discrete problem is: find $(\mathbf{u}_h, p_h)\in \mathbf{H}_h:=\mathbf{V}_h\times \Pi_h$ such that
\begin{equation}
\label{DiscDarcy}\mathscr{A}((\mathbf{u}_h, p_h), (\mathbf{v}_h, q_h)) = \mathscr{L}((\mathbf{v}_h, q_h))
~~~\forall (\mathbf{v}_h, q_h)\in \mathbf{H}_h.
\end{equation}
%If $\{\mathbf{w}_i\}_{i = 1}^{\text{dim}(\mathbf{V}_h)}$ and $\{\phi_j\}_{j=1}^{\text{dim}(\Pi_h)}$ are given bases of $\mathbf{V}_h$ and $\Pi_h$, respectively, then we have the expansions
%\begin{equation*}
%\mathbf{u}_h = \sum_{i=1}^{\text{dim}(\mathbf{V}_h)}U_i\mathbf{w}_i,\;\;\;\; p_h =\sum_{j = 1}^{\text{dim}(\Pi_h)}P_j\phi_j.
%\end{equation*}
%The real coefficients $\mathbf{U} = \{U_i\}$ and $\mathbf{P} = \{P_j\}$ satisfy the matrix system
%\begin{equation}\label{discSaddle}
%\begin{bmatrix} \mathbf{A} & \mathbf{B}^\top\\ \mathbf{B} & \mathbf{O}\end{bmatrix}\begin{bmatrix}\mathbf{U}\\\mathbf{P}\end{bmatrix} = \begin{bmatrix}\mathbf{G}\\\mathbf{F}\end{bmatrix},
%\end{equation}
%which can then be computed by a choice of linear system solvers. The matrices and vectors appearing in (\ref{discSaddle}) are simply defined by the bilinear forms and linear functionals, respectively, from the formulation (\ref{SP1})--(\ref{SP2}), given by
%\begin{alignat*}{2}
%&(\mathbf{A})_{ij} := a(\mathbf{w}_j, \mathbf{w}_i),\;\;\;\;&&(\mathbf{B})_{ij}  := b(\mathbf{w}_j, \varphi_i),\\
%&(\mathbf{G})_i := G(\mathbf{w}_i),\;\;\;\;&&(\mathbf{F})_j  := F(\varphi_j),
%\end{alignat*}
%for all $i = 1, 2, \dots, \text{dim}(\mathbf{V}_h)$ and $j = 1, 2, \dots, \text{dim}(\Pi_h)$. The matrix $\mathbf{O}$ is the zero-matrix such that the concatenation of matrices in (\ref{discSaddle}) defines a square matrix with dimension $\text{dim}(\mathbf{V}_h) + \text{dim}(\Pi_h)$.

% --- NEW SUBSECTION --- %

%% --- NEW SUBSECTION --- %

\subsection{Goal--Oriented Error Estimation}\label{DWRsec}
In this section we briefly present the general DWR theory for the \textsl{a posteriori} 
error estimation for a general nonlinear functional ${\mathcal Q}:{\mathbf H} \rightarrow {\mathbb R}$ for the flow problem \eqref{SP3}; for simplicity of 
presentation, here the underlying PDE problem is linear, though we stress that the proceeding analysis 
naturally generalises to the nonlinear setting.

To this end, given \eqref{SP3} and its corresponding finite element approximation defined by \eqref{DiscDarcy},
we define the error in the quantity of interest ${\mathcal Q}(\mathbf{u},p)$, by
\begin{equation}\label{goalerror}
 {\mathcal E}^{\mathcal Q}_h := {\mathcal Q}(\mathbf{u},p) - {\mathcal Q}(\mathbf{u}_h,p_h).
\end{equation}
To estimate this quantity we introduce the following sequence of \textit{adjoint or dual} problems, relative to the variational problem (\ref{SP3}), with respect to the functional ${\mathcal Q}$:

\textbf{Adjoint problem I:} find $(\mathbf{z}, r)\in\mathbf{H}$ such that
\begin{equation}\label{DWRformal}
\mathscr{A}((\mathbf{v}, q), (\mathbf{z}, r)) = \overline{{\mathcal Q}}((\mathbf{u}, p), (\mathbf{u}_h, p_h); (\mathbf{v}, q))\;\;\;\;\forall(\mathbf{v}, q)\in\mathbf{H},
\end{equation}
where the mean-value linearisation of ${\mathcal Q}(\cdot)$, evaluated at $\zeta\in\mathbf{H}$, is defined as 
\begin{equation}\label{mvlDef}
	\overline{{\mathcal Q}}((\mathbf{u}, p), (\mathbf{u}_h, p_h); \zeta) := \int_0^1 {\mathcal Q}'[\vartheta(\mathbf{u}, p) + (1 - \vartheta)(\mathbf{u}_h, p_h)](\zeta)\,d\vartheta.
\end{equation}

\textbf{Adjoint problem II:} find $(\mathbf{z}_\star, r_\star)\in\mathbf{H}$ such that
\begin{equation}\label{DWRlin}
\mathscr{A}((\mathbf{v}, q), (\mathbf{z}_\star, r_\star)) = {\mathcal Q}'[(\mathbf{u}_h, p_h)]((\mathbf{v}, q))\;\;\;\;\forall(\mathbf{v}, q)\in\mathbf{H}.
\end{equation}

\textbf{Discrete adjoint problem II:} find $(\mathbf{z}_h, r_h)\in\mathscr{W}_h$ such that
\begin{equation}\label{DWRdisclin}
\mathscr{A}((\mathbf{v}_h, q_h), (\mathbf{z}_h, r_h)) = {\mathcal Q}'[(\mathbf{u}_h, p_h)]((\mathbf{v}_h, q_h))\;\;\;\;\forall(\mathbf{v}_h, q_h)\in\mathscr{W}_h.
\end{equation}
Here, the finite-dimensional space $\mathscr{W}_h$ can be any space such that $\mathscr{W}_h\subset\mathbf{H}$ but so that $\mathscr{W}_h\not\subset\mathbf{H}_h$, for reasons relating to Galerkin orthogonality that we shall see later. If hierarchical bases are used within the finite-element method, then a popular choice is to have $\mathscr{W}_h$ defined on the same mesh $\mathscr{T}_h$ as $\mathbf{H}_h$, but employ higher-order polynomials. We also see already here the need to be able to evaluate the Gâteaux derivative of the nonlinear functional representing the quantity of interest, since it appears in both of the adjoint problems (\ref{DWRlin}) and (\ref{DWRdisclin}).

Defining the residual by
\begin{equation}\label{residual}
\mathscr{R}_h(\mathbf{v}, q) := \mathscr{L}((\mathbf{v}, q)) - \mathscr{A}((\mathbf{u}_h, p_h), (\mathbf{v}, q)),
\end{equation}
we have, by employing standard arguments, the following error representation formula.

\begin{proposition}[Error Representation]
Let $(\mathbf{u}, p)$ denote the solution of the primal problem (\ref{SP3}), $(\mathbf{u}_h, p_h)$ solve the discrete, primal problem (\ref{DiscDarcy}) and $(\mathbf{z}, r)$ be the solution of the adjoint problem (\ref{DWRformal}). Then, the following equality holds
\begin{equation}
 \mathcal{E}^{\mathcal Q}_h = \mathscr{R}_h(\mathbf{z} - \mathbf{z}_I, r - r_I) \label{ERimpoved}
\end{equation}
for all $(\mathbf{z}_I, r_I)\in\mathbf{H}_h$.
\end{proposition}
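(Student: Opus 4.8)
The plan is to chain together three ingredients already at hand: the mean-value linearisation \eqref{mvlDef}, the adjoint problem \eqref{DWRformal}, and Galerkin orthogonality. First I would apply the fundamental theorem of calculus to the scalar function $\vartheta\mapsto{\mathcal Q}\big(\vartheta(\mathbf{u},p)+(1-\vartheta)(\mathbf{u}_h,p_h)\big)$ on the interval $[0,1]$. Its endpoint values are precisely ${\mathcal Q}(\mathbf{u},p)$ and ${\mathcal Q}(\mathbf{u}_h,p_h)$, and the chain rule identifies its derivative with ${\mathcal Q}'[\vartheta(\mathbf{u},p)+(1-\vartheta)(\mathbf{u}_h,p_h)]\big((\mathbf{u},p)-(\mathbf{u}_h,p_h)\big)$, so that
\begin{equation*}
{\mathcal E}^{\mathcal Q}_h = \int_0^1 {\mathcal Q}'[\vartheta(\mathbf{u},p)+(1-\vartheta)(\mathbf{u}_h,p_h)]\big((\mathbf{u},p)-(\mathbf{u}_h,p_h)\big)\,d\vartheta = \overline{{\mathcal Q}}\big((\mathbf{u},p),(\mathbf{u}_h,p_h);(\mathbf{u},p)-(\mathbf{u}_h,p_h)\big),
\end{equation*}
recognising the right-hand side as the mean-value linearisation \eqref{mvlDef} evaluated in the direction of the error $(\mathbf{u},p)-(\mathbf{u}_h,p_h)\in\mathbf{H}$.

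Next, since this error pair is an admissible argument in $\mathbf{H}$, I would substitute it for $(\mathbf{v},q)$ in the adjoint problem \eqref{DWRformal}, which gives ${\mathcal E}^{\mathcal Q}_h = \mathscr{A}\big((\mathbf{u},p)-(\mathbf{u}_h,p_h),(\mathbf{z},r)\big)$. Exploiting the bilinearity of $\mathscr{A}$, and testing the primal problem \eqref{SP3} against $(\mathbf{z},r)\in\mathbf{H}$ to replace $\mathscr{A}((\mathbf{u},p),(\mathbf{z},r))$ by $\mathscr{L}((\mathbf{z},r))$, this collapses, via the definition \eqref{residual} of the residual, to
\begin{equation*}
{\mathcal E}^{\mathcal Q}_h = \mathscr{L}((\mathbf{z},r)) - \mathscr{A}\big((\mathbf{u}_h,p_h),(\mathbf{z},r)\big) = \mathscr{R}_h(\mathbf{z},r).
\end{equation*}

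Finally, to reach the interpolation-weighted form I would invoke Galerkin orthogonality: the discrete primal problem \eqref{DiscDarcy} states that $\mathscr{R}_h(\mathbf{v}_h,q_h)=\mathscr{L}((\mathbf{v}_h,q_h))-\mathscr{A}((\mathbf{u}_h,p_h),(\mathbf{v}_h,q_h))=0$ for every $(\mathbf{v}_h,q_h)\in\mathbf{H}_h$. Because $\mathscr{R}_h$ is linear in its argument, subtracting any $(\mathbf{z}_I,r_I)\in\mathbf{H}_h$ leaves its value unchanged, yielding ${\mathcal E}^{\mathcal Q}_h = \mathscr{R}_h(\mathbf{z}-\mathbf{z}_I,r-r_I)$ for all $(\mathbf{z}_I,r_I)\in\mathbf{H}_h$, as claimed.

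The genuinely delicate point is the very first step. The fundamental-theorem-of-calculus argument presupposes that ${\mathcal Q}$ is Gâteaux differentiable along the \emph{entire} segment joining $(\mathbf{u}_h,p_h)$ to $(\mathbf{u},p)$, with the derivative integrable in $\vartheta$; for the travel time functional this is exactly the global smoothness that is \emph{not} guaranteed, as flagged in Section \ref{prelims}. In practice one therefore relies on local regularity near the solution together with the existence of the Gâteaux derivative established in Theorem \ref{mainresult}. Everything after that first identity is purely algebraic, resting only on the bilinearity of $\mathscr{A}$, the linearity of $\mathscr{R}_h$, and the two variational statements \eqref{SP3} and \eqref{DiscDarcy}.
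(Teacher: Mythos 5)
Your proof is correct and is precisely the standard DWR argument that the paper invokes without writing out (mean-value linearisation via the fundamental theorem of calculus, substitution of the error into the adjoint problem, the primal equation to form the residual, and Galerkin orthogonality to insert the interpolant). Your closing remark about differentiability of ${\mathcal Q}$ along the whole segment is also the right caveat, consistent with the paper's own acknowledgement that the regularity of the travel time functional is not addressed.
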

%\begin{proof}
%Employing (\ref{mvlDef}), (\ref{DWRPrimal}), (\ref{DWRdiscPrimal}), and (\ref{residual}) we deduce that
%\begin{align*}
% E_h^\mathcal{Q} & = \mathcal{Q}((\mathbf{u}, p)) - \mathcal{Q}((\mathbf{u}_h, p_h))\\
%\label{mvl}& = \overline{\mathcal{Q}}((\mathbf{u}, p), (\mathbf{u}_h, p_h); (\mathbf{u} - \mathbf{u}_h, p - p_h))\\
%& = \mathscr{A}((\mathbf{u} - \mathbf{u}_h, p - p_h), (\mathbf{z}, r))\\
%& = \mathscr{A}((\mathbf{u}, p), (\mathbf{z}, r)) - \mathscr{A}((\mathbf{u}_h, p_h), (\mathbf{z}, r))\\
%& = \mathscr{L}((\mathbf{z}, r)) - \mathscr{A}((\mathbf{u}_h, p_h), (\mathbf{z}, r))\\
%& = \mathscr{R}_h((\mathbf{z}, r)).
%\end{align*}
%\end{proof}
%Furthermore, due to the Galerkin orthogonality, we note that
%\begin{equation*}
%\mathscr{A}((\mathbf{u} - \mathbf{u}_h, p - p_h), (\mathbf{z}_I, r_I)) = 0\;\;\;\;\forall(\mathbf{z}_I, r_I)\in\mathbf{H}_h,
%\end{equation*}
%we have that for all $(\mathbf{z}_I, r_I)\in\mathbf{H}_h$,
%\begin{equation}\label{ERimpoved}
% E_h^\mathcal{Q} = \mathscr{R}_h((\mathbf{z} - \mathbf{z}_I, r - r_I)).
%\end{equation}
In particular, (\ref{ERimpoved}) is relevant for localising an estimate of the error representation, in order to potentially drive mesh adaptivity. Of course, (\ref{ERimpoved}) is not computable since the formal adjoint solutions $(\mathbf{z}, r)$ are not, in general, computable themselves. We must instead use the approximate linearised adjoint problem, and its discretisation, in order to approximate the error (\ref{goalerror}). 

To this end, we can see easily that, for all $(\mathbf{z}_I, r_I)\in\mathbf{H}_h$, the residual may be decomposed into the three parts
\begin{align*}
 \mathcal{E}^{\mathcal Q}_h %& = \mathscr{R}_h(\mathbf{z} - \mathbf{z}_I, r - r_I)\\
						  & = \mathscr{R}_h(\mathbf{z} - \mathbf{z}_\star, r - r_\star) + \mathscr{R}_h(\mathbf{z}_\star - \mathbf{z}_h, r_\star - r_h) + \mathscr{R}_h(\mathbf{z}_h - \mathbf{z}_I, r_h - r_I).
\end{align*}
The first term $\mathscr{R}_h(\mathbf{z} - \mathbf{z}_\star, r - r_\star)$ represents the error induced by the approximate linearisation of the formal adjoint problem; the second term $\mathscr{R}_h(\mathbf{z}_\star - \mathbf{z}_h, r_\star - r_h)$ represents the error induced by discretising the approximate linearised adjoint problem. The last term, $\mathscr{R}_h(\mathbf{z}_h - \mathbf{z}_I, r_h - r_I)$ is most useful since it is \textit{computable}. If we assume that the other, non-computable, residuals converge to zero with an asymptotic rate \textit{faster} than this latter term, we can simply estimate the error in the quantity of interest with the computable part directly by
\begin{equation}\label{ErrorEstimate}
 \mathcal{E}_h^{\mathcal Q} \approx \mathscr{R}_h(\mathbf{z}_h - \mathbf{z}_I, r_h - r_I).
\end{equation}
Typically, the functions $\mathbf{z}_I$ and $r_I$ are chosen to be interpolants: projecting the discrete linearised adjoint solutions $\mathbf{z}_h$ and $r_h$ from $\mathscr{W}_h$ into $\mathbf{H}_h$. We stress that the presence of these interpolants are essential to ensure that the \textit{double} rate of convergence expected in optimal goal-oriented adaptive regimes is retained when local elementwise error indicators are defined based on \eqref{ErrorEstimate}, cf. below.

Under mesh refinement, whether it be uniform or adaptive, the estimate (\ref{ErrorEstimate}) converges to the true error if the \textit{effectivity index}
$\theta_h := \mathcal{E}_h^{\mathcal Q} /\mathscr{R}_h(\mathbf{z}_h - \mathbf{z}_I, r_h - r_I) \rightarrow 1$
as the mesh is refined. Section \ref{numexp} showcases numerical evidence of this behaviour for both simple and more complex examples, under uniform and adaptive refinement.

% --- NEW SUBSUBSECTION --- %

\subsubsection{Estimate Localisation for Darcy's Equations}
In this section we localise the error estimate (\ref{ErrorEstimate}) into element-based indicators on the mesh $\mathscr{T}_h$, based on the usual, integration-by-parts approach. 
%To this end, consider the goal functional $\mathcal{Q} : \mathbf{H}\rightarrow\mathbb{R}$, where $\mathbf{H}$ is now the space $H_{0, N}(\text{div}, \Omega)\times L^2(\Omega)$ and $\mathbf{H}_h\subset\mathbf{H}$ an appropriate finite-dimensional subspace. Here, the bilinear form $\mathscr{A}$ is given by (\ref{bilin}) and the linear functional $\mathscr{L}$ is given by (\ref{lin}). Standard DWR theory tells us that the error, $\mathcal{Q}((\mathbf{u}, p) - \mathcal{Q}((\mathbf{u}_h, p_h))\equiv E_h^\mathcal{Q}$, is approximately given by
%\begin{equation}\label{DarcyEE}
% E_h^\mathcal{Q} \approx \mathscr{L}((\mathbf{z}_h - \mathbf{z}_I, r_h - r_I)) - \mathscr{A}((\mathbf{u}_h, p_h), (\mathbf{z}_h - \mathbf{z}_I, r_h - r_I))\;\;\;\;\forall(\mathbf{z}_I, r_I)\in\mathbf{H}_h,
%\end{equation}
%according to the error estimate (\ref{ERimpoved}). Indeed, the discrete linearised adjoint solutions $(\mathbf{z}_h, r_h)\in\mathscr{W}_h$ (where $\mathscr{W}_h\subset\mathbf{H}$ is such that $\mathscr{W}_h\not\subset\mathbf{H}_h$) solve the problem (\ref{DWRdisclin}) whose right-hand-side requires the evaluation of the Gâteaux derivative $\mathcal{Q}'[(\mathbf{u}_h, p_h)]((\mathbf{w}_h, \psi_h))$ for $(\mathbf{w}_h, \psi_h)\in\mathscr{W}_h$. In the case where $\mathcal{Q}$ is the travel time functional, such derivative can be evaluated via Theorem \ref{mainresult}. 
To this end, writing the right-hand side of (\ref{ErrorEstimate}) as a sum over the mesh $\mathscr{T}_h$, we get
\begin{align}
  \mathcal{E}_h^{\mathcal Q} \approx \sum_{\kappa\in\mathscr{T}_h}\Big( & -\langle(\mathbf{z}_h - \mathbf{z}_I)\cdot\mathbf{n}_\kappa, g_D\rangle_{\partial\kappa\cap\partial\Omega_D} - \int_\kappa (r_h - r_I)f\nonumber\\
&-\int_\kappa\mathbf{K}^{-1}\mathbf{u}_h\cdot(\mathbf{z}_h - \mathbf{z}_I) + \int_\kappa p_h\nabla\cdot(\mathbf{z}_h - \mathbf{z}_I) + \int_\kappa (r_h - r_I)\nabla\cdot\mathbf{u}_h\Big), \label{sumovereles2}
\end{align}
where $\mathbf{n}_\kappa$ denotes the unit outward normal vector to element $\kappa\in\mathscr{T}_h$.
Employing the Green's formula stated in Proposition \ref{IBP}, we see that in particular
\begin{equation*}
\int_\kappa p_h\nabla\cdot(\mathbf{z}_h - \mathbf{z}_I) = -\int_\kappa(\mathbf{z}_h - \mathbf{z}_I)\cdot\nabla p_h + \langle(\mathbf{z}_h - \mathbf{z}_I)\cdot\mathbf{n}_\kappa, p_h\rangle_{\partial\kappa}.
\end{equation*}
Therefore, summing over the elements in the mesh, gives 
\begin{align}
\nonumber \sum_{\kappa\in\mathscr{T}_h}\int_\kappa p_h\nabla\cdot(\mathbf{z}_h - \mathbf{z}_I) = \sum_{\kappa\in\mathscr{T}_h}\Big(& -\int_\kappa(\mathbf{z}_h - \mathbf{z}_I)\cdot\nabla p_h + \frac{1}{2}\langle(\mathbf{z}_h - \mathbf{z}_I)\cdot\mathbf{n}_\kappa, \llbracket p_h\rrbracket\rangle_{\partial\kappa\setminus\partial\Omega}\\
& \label{SlotTerm2}+ \langle(\mathbf{z}_h - \mathbf{z}_I)\cdot\mathbf{n}_\kappa, p_h\rangle_{\partial\kappa\cap\partial\Omega_D}\Big),
\end{align}
where $\llbracket \cdot \rrbracket$ denotes the jump operator across an element face.
Inserting (\ref{SlotTerm2}) into (\ref{sumovereles2}) gives the following result.
\begin{theorem}\label{DarcyLocal}
Under the foregoing notation, we have the (approximate) a posteriori error estimate
\begin{equation*}
\vert \mathcal{E}_h^{\mathcal Q}\vert \approx \bigg\vert\sum_{\kappa\in\mathscr{T}_h}\eta_\kappa\bigg\vert \leq \sum_{\kappa\in\mathscr{T}_h}\vert\eta_\kappa\vert
\end{equation*}
where the element indicator $\eta_\kappa$ is split into the four contributions 
\begin{equation*}
\eta_\kappa \equiv \eta_\kappa^{BC} + \eta_\kappa^{DL} + \eta_\kappa^{CM} + \eta_\kappa^{PR},
\end{equation*} 
each given by: 
\begin{align}
\label{I1} \eta_\kappa^{BC} & = \langle(\mathbf{z}_h - \mathbf{z}_I)\cdot\mathbf{n}_\kappa, p_h - g_D\rangle_{\partial\kappa\cap\partial\Omega_D},\\
\label{I2} \eta_\kappa^{DL} & = -\int_\kappa(\mathbf{K}^{-1}\mathbf{u}_h + \nabla p_h)\cdot(\mathbf{z}_h - \mathbf{z}_I),\\
\label{I3} \eta_\kappa^{CM} & = \int_\kappa(r_h - r_I)(\nabla\cdot\mathbf{u}_h - f),\\
\label{I4} \eta_\kappa^{PR} & = \frac{1}{2}\langle(\mathbf{z}_h - \mathbf{z}_I)\cdot\mathbf{n}_\kappa, \llbracket p_h\rrbracket\rangle_{\partial\kappa\setminus\partial\Omega}.
\end{align}
\end{theorem}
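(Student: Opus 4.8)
The plan is to obtain the stated estimate by a direct expansion-and-collection argument starting from the computable error identity \eqref{ErrorEstimate}, since essentially all of the analytical content has already been assembled in \eqref{sumovereles2}--\eqref{SlotTerm2}. First I would substitute the definitions of $\mathscr{R}_h$, $\mathscr{L}$, and $\mathscr{A}$ (through the forms $a$, $b$, $G$, and $F$) into the right-hand side of \eqref{ErrorEstimate}, evaluated at the argument $(\mathbf{z}_h - \mathbf{z}_I, r_h - r_I)$. Writing each domain integral and each boundary duality pairing as a sum of element-wise contributions over $\mathscr{T}_h$ then yields precisely \eqref{sumovereles2}; here the boundary functional $G(\cdot)$ contributes only on $\partial\Omega_D$, because the normal trace of $\mathbf{z}_h - \mathbf{z}_I$ vanishes on $\partial\Omega_N$ by membership in $H_{0,N}(\mathrm{div},\Omega)$.

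The next step is to remove the divergence acting on the adjoint velocity in the term $\int_\kappa p_h \nabla\cdot(\mathbf{z}_h - \mathbf{z}_I)$, so that the Darcy-law residual $\mathbf{K}^{-1}\mathbf{u}_h + \nabla p_h$ can be recovered element-wise. Applying the Green's formula of Proposition \ref{IBP} on each $\kappa$ transfers the divergence onto $p_h$, producing the volume term $-\int_\kappa (\mathbf{z}_h - \mathbf{z}_I)\cdot\nabla p_h$ together with the face pairing $\langle(\mathbf{z}_h - \mathbf{z}_I)\cdot\mathbf{n}_\kappa, p_h\rangle_{\partial\kappa}$. Combining the new $-\int_\kappa (\mathbf{z}_h - \mathbf{z}_I)\cdot\nabla p_h$ with the existing $-\int_\kappa \mathbf{K}^{-1}\mathbf{u}_h\cdot(\mathbf{z}_h - \mathbf{z}_I)$ immediately forms $\eta_\kappa^{DL}$, while the compatibility terms collapse into $\eta_\kappa^{CM}$ and the Dirichlet face pairings merge with the $g_D$ boundary term to give $\eta_\kappa^{BC}$.

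The only delicate point, and the step I expect to be the main obstacle, is the treatment of the face pairings generated above. Each interior face is shared by two elements $\kappa^+$ and $\kappa^-$ whose outward normals satisfy $\mathbf{n}_{\kappa^+} = -\mathbf{n}_{\kappa^-}$; since $\mathbf{z}_h - \mathbf{z}_I \in H(\mathrm{div},\Omega)$ its normal trace is single-valued across the face, whereas $p_h$ is generally discontinuous. Summing the two contributions therefore pairs a single normal trace against the difference of the pressure traces, i.e.\ against $\llbracket p_h \rrbracket$, and reorganising this face sum back into a sum over elements — in which each interior face is visited twice — accounts for the factor $\tfrac{1}{2}$ and yields exactly \eqref{SlotTerm2} and hence $\eta_\kappa^{PR}$; boundary faces lying on $\partial\Omega_N$ drop out by the same vanishing-normal-trace property. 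Once the four groups $\eta_\kappa^{BC} + \eta_\kappa^{DL} + \eta_\kappa^{CM} + \eta_\kappa^{PR}$ have been identified, the claimed bound follows at once by inserting \eqref{SlotTerm2} into \eqref{sumovereles2} and applying the triangle inequality $\bigl|\sum_\kappa \eta_\kappa\bigr| \le \sum_\kappa |\eta_\kappa|$.
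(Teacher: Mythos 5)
Your proposal is correct and follows essentially the same route as the paper: expand the computable residual $\mathscr{R}_h(\mathbf{z}_h-\mathbf{z}_I,r_h-r_I)$ elementwise to obtain \eqref{sumovereles2}, integrate the pressure term by parts via Proposition \ref{IBP} to produce \eqref{SlotTerm2}, and collect the resulting volume and face contributions into the four indicators before applying the triangle inequality. Your explicit justification of the factor $\tfrac{1}{2}$ on interior faces and of the vanishing contributions on $\partial\Omega_N$ fills in details the paper leaves implicit, but the argument is the same.
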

Each of the indicator contributions (\ref{I1})--(\ref{I4}) are \textit{adjoint-weighted} and may be interpreted as the following: $\eta_\kappa^{BC}$ measures how well the boundary condition (\ref{Dbc}) is satisfied; $\eta_\kappa^{DL}$ measures how well Darcy's Law (\ref{DL}) is satisfied; $\eta_\kappa^{CM}$ measures how well the conservation of mass equation (\ref{CM}) is satisfied; and finally, $\eta_\kappa^{PR}$ is a measure of the interior pressure residual across element interfaces.

% --- NEW SECTION --- %

\section{Linearising the Travel Time Functional}\label{MainResultSec}
Recalling the discussion presented in Section \ref{prelims}, we emphasise that the main result (i.e. evaluating the Gâteaux derivative of the travel time functional) is independent of where the velocity field $\mathbf{u}$ has come from; for now we are concerned only about the continuity of $\mathbf{u}$. Indeed, computing an approximation to the travel time functional via an approximation of the velocity field $\mathbf{u}$ may or may not lead to a continuous velocity field; this depends on the fluid model and the type of approximation that is employed.

More explicitly: suppose our problem was not in groundwater flow and the disposal of radioactive waste, but instead that we are interested in $T(\mathbf{u};\mathbf{x}_0)$ where $\mathbf{u}$ is a flow governed by Stokes equations. In this situation, typically vector-valued $H^1$-conforming elements are employed (cf. \cite{BS}), on some mesh $\mathscr{T}_h$, to obtain an approximation (at least in two spatial dimensions) $\mathbf{u}_h$ that is continuous across the element interfaces. Here, Theorem \ref{preMainResult} can be applied to evaluate the derivative $T'[\mathbf{u}_h](\cdot)$ (to, for example, drive an adaptive mesh refinement algorithm). However, in the context of this work, an $H(\text{div})$-conforming approximation of a flow governed by Darcy's equations is used and as such, this conformity does not guarantee continuity of the velocity field across element interfaces. Thereby, in the following discussion we derive a more general result stated in Theorem \ref{mainresult}.

% --- NEW SUBSECTION --- %

\subsection{Linearisation in the Discontinuous Case}
Given the domain $\Omega\subset\mathbb{R}^d$, $d=2,3$, denote by $\mathcal{I}$ the semi-infinite time interval $[0, \infty)$. Furthermore, suppose we have the possibly time-dependent velocity field
$
\mathbf{v} : (\mathbb{R}^d\times\mathcal{I})\rightarrow\mathbb{R}^d.
$
The particle trajectory of the velocity field, $\mathbf{X}_\mathbf{v} : \mathcal{I}\rightarrow\mathbb{R}^d$, satisfies the IVP:
\begin{equation}\label{traj}
\begin{cases}\frac{d\mathbf{X}_{\mathbf{v}}}{dt} = \mathbf{v}(\mathbf{X}_\mathbf{v}, t) & \;\;\;\;\forall t\in\mathcal{I},\\
             \mathbf{X}_\mathbf{v}(0) = \mathbf{x}_0,\end{cases}
\end{equation}
where the initial position $\mathbf{x}_0\in\Omega$.

The main result is stated below in Theorem \ref{mainresult}, which provides the evaluation of the Gâteaux derivative $T'[\mathbf{v}](\cdot)$, of the travel time functional $T(\cdot)$.

\begin{theorem}\label{mainresult}
Let $\mathbf{n} = \mathbf{n}(\mathbf{x})$ be the unit outward normal vector to the boundary $\partial\Omega$. Assume firstly that $\partial\Omega$ is locally flat at the exit point $\mathbf{X}(T_\mathbf{v})$, so that the unit outward normal vector $\mathbf{n} = \mathbf{n}(\mathbf{X}(T_\mathbf{v}))$ is unique. Assume also that the particle trajectory does not exit $\Omega$ parallel to the boundary, so that $\mathbf{v}(\mathbf{X}(T_\mathbf{v}), T_\mathbf{v})\cdot\mathbf{n}(\mathbf{X}(T_\mathbf{v})) \neq 0$. Suppose that $\mathscr{T}_h$ is a simplicial partition of $\Omega$ and that $\mathbf{v}$ is discontinuous across the faces $\{\mathcal{F}_i\}$ that intersect the path $t\mapsto\mathbf{X}(t)$, defined by (\ref{traj}) at the times $\{t_i = t_{i,\mathbf{v}}\}$. Lastly assume that the particle trajectory does not exit any element in $\mathscr{T}_h$ parallel to its boundary, or through the boundary of one of the element faces, except possibly at the end, given the previous assumption about local flatness. With the above notation described, let $\mathbf{Z} : [0, T_\mathbf{v}]\rightarrow\mathbb{R}^d$ be the solution to the adjoint, or dual (linearised-adjoint, backward-in-time) IVP:
\begin{equation}\label{adjointIVP}
\begin{cases}\mathcal{L}_\mathbf{v}^*(\mathbf{Z}(t)) \equiv -\frac{d\mathbf{Z}}{dt} - [\nabla\mathbf{v}(\mathbf{X}(t), t)]^\top\mathbf{Z} = \mathbf{0} & \;\;\;\;\forall t\in[0, T_\mathbf{v})\setminus\{t_{i,\mathbf{v}}\},\\ \\
\mathbf{Z}(T_\mathbf{v}) = -\frac{\mathbf{n}(\mathbf{X}(T_\mathbf{v}))}{\mathbf{v}(\mathbf{X}(T_\mathbf{v}), T_\mathbf{v})\cdot\mathbf{n}(\mathbf{X}(T_\mathbf{v}))},\\ \\
\llbracket\mathbf{Z}(t_{i,\mathbf{v}})\rrbracket = -\frac{\mathbf{Z}(t_{i,\mathbf{v}}^+)\cdot\llbracket\mathbf{v}(t_{i,\mathbf{v}})\rrbracket\mathbf{n}_i^-}{\mathbf{v}(\mathbf{X}(t_{i,\mathbf{v}}^-), t_{i,\mathbf{v}}^-)\cdot\mathbf{n}_i^-} & \;\;\;\;\forall i,\end{cases}
\end{equation}
where $\mathbf{n}_i^-$ is the unit outward normal vector to the faces $\{\mathcal{F}_i\}$, pointing in the same direction as the particle trajectory $\mathbf{X_v}(t)$ at the time of intersection $t = t_i$, and where $\llbracket\mathbf{Z}(t_{i,\mathbf{v}})\rrbracket = \mathbf{Z}(t_{i,\mathbf{v}}^+) - \mathbf{Z}(t_{i,\mathbf{v}}^-)$ and $\llbracket\mathbf{v}(t_{i,\mathbf{v}})\rrbracket = \mathbf{v}(\mathbf{X}(t_{i, \mathbf{v}}^+), t_{i, \mathbf{v}}^+) - \mathbf{v}(\mathbf{X}(t_{i, \mathbf{v}}^-), t_{i, \mathbf{v}}^-)$ denote jump operators. Then, the Gâteaux derivative of $T(\cdot)$, evaluated at $\mathbf{v}$, is given by
\begin{equation*}
T'[\mathbf{v}](\mathbf{w}) = \int_0^{T_\mathbf{v}}\mathbf{Z}(t)\cdot\mathbf{w}(\mathbf{X}(t), t)\,dt.
\end{equation*}
\end{theorem}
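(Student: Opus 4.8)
The plan is to compute the Gâteaux derivative directly by perturbing the velocity field, tracking the induced perturbation of both the trajectory and its exit time, and then recognising the adjoint IVP \eqref{adjointIVP} as exactly the device that collapses this computation into the claimed integral. First I would set $\mathbf{X}_\epsilon := \mathbf{X}_{\mathbf{v}+\epsilon\mathbf{w}}$ and introduce the forward sensitivity $\mathbf{Y}(t) := \partial_\epsilon \mathbf{X}_\epsilon(t)\big|_{\epsilon=0}$. Differentiating the trajectory IVP \eqref{traj} in $\epsilon$ shows that, on each open subinterval between consecutive crossing times, $\mathbf{Y}$ solves the linearised (forward) variational equation $\dot{\mathbf{Y}} = \nabla\mathbf{v}(\mathbf{X}(t),t)\,\mathbf{Y} + \mathbf{w}(\mathbf{X}(t),t)$ with $\mathbf{Y}(0) = \mathbf{0}$. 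Writing $T_\epsilon := T(\mathbf{v}+\epsilon\mathbf{w};\mathbf{x}_0)$ and using local flatness at the exit point, the exit condition reads $\mathbf{n}\cdot\mathbf{X}_\epsilon(T_\epsilon) = \text{const}$; differentiating this in $\epsilon$ and invoking the non-tangential assumption $\mathbf{v}(\mathbf{X}(T_\mathbf{v}),T_\mathbf{v})\cdot\mathbf{n}\neq 0$ yields the exit-time sensitivity $T'[\mathbf{v}](\mathbf{w}) = -\mathbf{n}\cdot\mathbf{Y}(T_\mathbf{v})/(\mathbf{v}(\mathbf{X}(T_\mathbf{v}),T_\mathbf{v})\cdot\mathbf{n})$. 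The terminal datum for $\mathbf{Z}$ in \eqref{adjointIVP} is chosen precisely so that this equals $\mathbf{Z}(T_\mathbf{v})\cdot\mathbf{Y}(T_\mathbf{v})$.

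Next I would run the standard adjoint pairing. On each smooth subinterval, using the adjoint equation $-\dot{\mathbf{Z}} = [\nabla\mathbf{v}]^\top\mathbf{Z}$ together with the forward variational equation and the transpose identity $([\nabla\mathbf{v}]^\top\mathbf{Z})\cdot\mathbf{Y} = \mathbf{Z}\cdot(\nabla\mathbf{v}\,\mathbf{Y})$, one finds $\tfrac{d}{dt}(\mathbf{Z}\cdot\mathbf{Y}) = \mathbf{Z}\cdot\mathbf{w}$. Integrating over $[0,T_\mathbf{v}]$ and summing across the subintervals gives $\int_0^{T_\mathbf{v}}\mathbf{Z}\cdot\mathbf{w}\,dt = \mathbf{Z}(T_\mathbf{v})\cdot\mathbf{Y}(T_\mathbf{v}) - \sum_i \llbracket(\mathbf{Z}\cdot\mathbf{Y})(t_{i,\mathbf{v}})\rrbracket$, since $\mathbf{Y}(0)=\mathbf{0}$. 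By the previous step the boundary term equals $T'[\mathbf{v}](\mathbf{w})$, so the theorem reduces to showing that every interior jump $\llbracket(\mathbf{Z}\cdot\mathbf{Y})(t_{i,\mathbf{v}})\rrbracket$ vanishes, i.e.\ that $\mathbf{Z}\cdot\mathbf{Y}$ is continuous across each face crossing.

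The main obstacle is the forward jump of $\mathbf{Y}$ at a crossing, which I expect to be the delicate part. Because the perturbed trajectory meets the face $\mathcal{F}_i$ at a shifted time $t_{i,\mathbf{v}}(\epsilon) = t_{i,\mathbf{v}} + \epsilon\tau_i + o(\epsilon)$ and the velocity is discontinuous there, $\mathbf{Y}$ itself jumps. I would determine $\tau_i$ from the face-crossing condition $\mathbf{n}_i^-\cdot\mathbf{X}_\epsilon(t_{i,\mathbf{v}}(\epsilon)) = \text{const}$, obtaining $\tau_i = -\mathbf{n}_i^-\cdot\mathbf{Y}(t_{i,\mathbf{v}}^-)/(\mathbf{v}(\mathbf{X}(t_{i,\mathbf{v}}^-),t_{i,\mathbf{v}}^-)\cdot\mathbf{n}_i^-)$, and then, via a Leibniz-rule differentiation of the trajectory integral with a moving lower limit, being careful that the one-sided velocities $\mathbf{v}^-$ and $\mathbf{v}^+$ enter on the two sides of the crossing, derive the forward jump $\llbracket\mathbf{Y}(t_{i,\mathbf{v}})\rrbracket = -\tau_i\llbracket\mathbf{v}(t_{i,\mathbf{v}})\rrbracket$. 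This moving-limit, one-sided bookkeeping is the step most prone to sign and side errors, and is where the hypothesis that the trajectory crosses each face transversally (not tangentially, and not through a face boundary) is essential to guarantee $\mathbf{v}^-\cdot\mathbf{n}_i^-\neq 0$ and a well-defined $\tau_i$.

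Finally I would substitute the forward jump together with the prescribed adjoint jump of \eqref{adjointIVP} into $\llbracket(\mathbf{Z}\cdot\mathbf{Y})(t_{i,\mathbf{v}})\rrbracket = \mathbf{Z}^+\cdot\mathbf{Y}^+ - \mathbf{Z}^-\cdot\mathbf{Y}^-$. Using $\mathbf{Y}^+ = \mathbf{Y}^- - \tau_i\llbracket\mathbf{v}\rrbracket$ and the expression for $\tau_i$, the jump collapses to $[\llbracket\mathbf{Z}\rrbracket + \tfrac{\mathbf{Z}^+\cdot\llbracket\mathbf{v}\rrbracket}{\mathbf{v}^-\cdot\mathbf{n}_i^-}\mathbf{n}_i^-]\cdot\mathbf{Y}(t_{i,\mathbf{v}}^-)$, where the superscripts denote the one-sided limits at $t_{i,\mathbf{v}}$. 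Since $\mathbf{w}$, and hence $\mathbf{Y}(t_{i,\mathbf{v}}^-)$, is arbitrary, continuity of $\mathbf{Z}\cdot\mathbf{Y}$ forces the bracket to vanish, which is exactly the adjoint jump condition prescribed in \eqref{adjointIVP}; thus every interior jump is zero and $\int_0^{T_\mathbf{v}}\mathbf{Z}\cdot\mathbf{w}\,dt = T'[\mathbf{v}](\mathbf{w})$, as claimed. The continuous-velocity result, Theorem \ref{preMainResult}, is then recovered immediately by setting every $\llbracket\mathbf{v}\rrbracket = \mathbf{0}$, so that all jump terms disappear.
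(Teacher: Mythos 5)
Your proposal is correct and follows essentially the same route as the paper: your forward sensitivity $\mathbf{Y}$ is the paper's trajectory derivative $\mathbf{X}'$ (Lemma \ref{lemma1}), your exit-time relation via local flatness is Lemma \ref{final}, and your adjoint pairing $\tfrac{d}{dt}(\mathbf{Z}\cdot\mathbf{Y}) = \mathbf{Z}\cdot\mathbf{w}$ with cancellation of the interior jumps is exactly the integration-by-parts argument in the paper's proof of Theorem \ref{mainresult}. The only differences are notational ($\mathbf{Y}$, $\tau_i$ in place of $\mathbf{X}'$, $t_i'$) and presentational (you frame the adjoint jump condition as forced by continuity of $\mathbf{Z}\cdot\mathbf{Y}$, whereas the paper verifies the cancellation given the prescribed condition).
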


The plus/minus notation refers to the times after/before, respectively, the trajectory $\mathbf{X_u}$ intersects the element interface, forwards in time.
We may also index $\mathbf{Z_v} \equiv \mathbf{Z}$ to indicate that $\mathbf{Z_v}$ solves the IVP (\ref{adjointIVP}) induced by the velocity field $\mathbf{v}$. Also, we note that if the velocity field driving the trajectory is in fact continuous across the element interfaces, then the jump terms vanish and Theorem~\ref{preMainResult} is recovered.

We now proceed to prove Theorem \ref{mainresult}. To this end, we require two lemmas which are given below. Firstly, consider the so-called trajectory derivative, corresponding to the change in the particle path as a result of a change in velocity:
\begin{equation*}
\mathbf{X}'\equiv\partial_\mathbf{v}\mathbf{X}_\mathbf{v}[\mathbf{w}] := \lim_{\varepsilon\rightarrow 0^+}\frac{\mathbf{X}_{\mathbf{v} + \varepsilon\mathbf{w}} - \mathbf{X}_\mathbf{v}}{\varepsilon},
\end{equation*}
recalling the notation that $\mathbf{X}_\mathbf{v}$ is the trajectory induced by the velocity field $\mathbf{v}$.

\begin{lemma}\label{lemma1}
Let $\mathbf{v}$ be as before, discontinuous across the faces $\{\mathcal{F}_i\}$ intersecting the path $t\mapsto\mathbf{X}_\mathbf{v}(t)$ at the times $\{t_i = t_{i, \mathbf{v}}\}$. Then, the trajectory derivative $\mathbf{X}' : \mathcal{I}\rightarrow \mathbb{R}^d$ satisfies the IVP:
\begin{equation}\label{derivjump}
\begin{cases}\mathcal{L}_\mathbf{v}(\mathbf{X}'(t))\equiv \frac{d\mathbf{X}'}{dt} - \nabla\mathbf{v}(\mathbf{X}_\mathbf{v}(t), t)\mathbf{X}' = \mathbf{w}(\mathbf{X}_\mathbf{v}(t), t) & \;\;\;\;\forall t\in\mathcal{I}\setminus\{t_i\},\\
\mathbf{X}'(0) = \mathbf{0},\\
\llbracket\mathbf{X}'(t_i)\rrbracket = -\llbracket\mathbf{v}(t_i)\rrbracket t_i' & \;\;\;\;\forall i,  \end{cases}
\end{equation}
where 
\begin{equation}\label{ti_label}
t_i' = -\frac{\mathbf{X}'(t_i^-)\cdot\mathbf{n}_i^-}{\mathbf{v}(\mathbf{X}_\mathbf{v}(t_i^-), t_i^-)\cdot\mathbf{n}_i^-}.
\end{equation}
\end{lemma}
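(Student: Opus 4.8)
The plan is to verify the three components of the IVP (\ref{derivjump}) in turn: the interior sensitivity ODE on each smooth subinterval, the initial condition, and the jump relations at the crossing times $\{t_i\}$. Throughout I write $\mathbf{X}_\varepsilon := \mathbf{X}_{\mathbf{v}+\varepsilon\mathbf{w}}$ and $\mathbf{X}_0 := \mathbf{X}_\mathbf{v}$. The initial condition is immediate, since $\mathbf{X}_\varepsilon(0) = \mathbf{x}_0 = \mathbf{X}_0(0)$ for every $\varepsilon$ forces the difference quotient to vanish at $t=0$, whence $\mathbf{X}'(0) = \mathbf{0}$. For the interior equation I would restrict to a subinterval on which $\mathbf{v}$ is $C^1$ and both trajectories lie on the same side of every face. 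Subtracting the trajectory equations, dividing by $\varepsilon$, and Taylor expanding $\mathbf{v}(\mathbf{X}_\varepsilon, t) = \mathbf{v}(\mathbf{X}_0,t) + \nabla\mathbf{v}(\mathbf{X}_0,t)(\mathbf{X}_\varepsilon - \mathbf{X}_0) + o(\lvert\mathbf{X}_\varepsilon - \mathbf{X}_0\rvert)$ gives
\[
\frac{d}{dt}\Big(\frac{\mathbf{X}_\varepsilon - \mathbf{X}_0}{\varepsilon}\Big) = \nabla\mathbf{v}(\mathbf{X}_0, t)\,\frac{\mathbf{X}_\varepsilon - \mathbf{X}_0}{\varepsilon} + \mathbf{w}(\mathbf{X}_\varepsilon, t) + o(1).
\]
Passing to the limit $\varepsilon\to 0^+$, using continuous dependence to send $\mathbf{X}_\varepsilon\to\mathbf{X}_0$, recovers $\mathcal{L}_\mathbf{v}(\mathbf{X}') = \mathbf{w}(\mathbf{X}_0, t)$. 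This is the classical variational equation for the flow map and is routine on each smooth piece.

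The crux is the jump analysis at each crossing time $t_i$, which first requires the crossing-time sensitivity (\ref{ti_label}). Since $\mathcal{F}_i$ is locally a hyperplane with normal $\mathbf{n}_i^-$ through a point $\mathbf{p}_i$, the crossing of $\mathbf{X}_\varepsilon$ is characterised by the scalar equation $(\mathbf{X}_\varepsilon(t_i^\varepsilon) - \mathbf{p}_i)\cdot\mathbf{n}_i^- = 0$. The non-tangency hypothesis $\mathbf{v}(\mathbf{X}_0(t_i^-), t_i^-)\cdot\mathbf{n}_i^-\neq 0$ permits an application of the implicit function theorem, yielding a differentiable map $\varepsilon\mapsto t_i^\varepsilon$ with $t_i^0 = t_i$ and $t_i' := \frac{d}{d\varepsilon}t_i^\varepsilon\big|_{\varepsilon=0}$. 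Differentiating the crossing equation at $\varepsilon = 0$ and using the chain rule
\[
\frac{d}{d\varepsilon}\mathbf{X}_\varepsilon(t_i^\varepsilon)\Big|_{\varepsilon=0} = \mathbf{X}'(t_i^-) + \mathbf{v}(\mathbf{X}_0(t_i^-), t_i^-)\,t_i'
\]
then projecting onto $\mathbf{n}_i^-$ produces exactly (\ref{ti_label}).

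For the jump itself I would evaluate $\mathbf{X}_\varepsilon - \mathbf{X}_0$ at a time $t_i + \delta$ just past the crossing and split the integrated trajectory equation across the short interval between $t_i$ and $t_i^\varepsilon$, of length $\lvert t_i^\varepsilon - t_i\rvert = O(\varepsilon)$. On this overlap interval the two trajectories sit on opposite sides of the discontinuity, so the integrand differs by $\mathbf{v}(\cdot^-) - \mathbf{v}(\cdot^+) = -\llbracket\mathbf{v}(t_i)\rrbracket + O(\varepsilon)$; integrating and dividing by $\varepsilon$ contributes $-\llbracket\mathbf{v}(t_i)\rrbracket\,t_i'$ in the limit, whereas the remaining smooth contributions over $[t_i^\varepsilon, t_i+\delta]$ vanish as $\delta\to 0$. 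Sending $\varepsilon\to 0^+$ and then $\delta\to 0^+$ gives $\mathbf{X}'(t_i^+) - \mathbf{X}'(t_i^-) = -\llbracket\mathbf{v}(t_i)\rrbracket\,t_i'$, which is the asserted jump condition.

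I expect the jump step to be the main obstacle. The delicate points are the careful bookkeeping of the order of limits (taking $\varepsilon\to 0$ before $\delta\to 0$), the first-order expansion of the integrand over the $O(\varepsilon)$-length overlap interval, and checking that the signs are consistent in both cases $t_i^\varepsilon \gtrless t_i$, since the length of the overlap interval and the sign of the velocity difference flip together to leave the formula unchanged. Rigorously establishing differentiability of $t_i^\varepsilon$ in $\varepsilon$, so that $t_i'$ is meaningful, is precisely where the non-tangency assumption is indispensable, as it guarantees the denominator in (\ref{ti_label}) is nonzero.
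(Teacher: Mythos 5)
Your proposal is correct, and the interior variational equation, the initial condition, and the derivation of $t_i'$ all follow essentially the paper's line: the paper likewise Taylor-expands $\mathbf{v}(\mathbf{X}_{\mathbf{v}+\varepsilon\mathbf{w}},t)$ about $\mathbf{X}_\mathbf{v}$ for the ODE, and obtains \eqref{ti_label} by projecting the total $\varepsilon$-derivative of the crossing position onto $\mathbf{n}_i^-$ (it phrases this via local flatness of the face and a Taylor expansion, as in Lemma~\ref{final}, rather than invoking the implicit function theorem, but the computation is the same). Where you genuinely diverge is the jump condition. The paper's argument is a two-line consequence of the chain-rule identity you already established: since the physical trajectory is continuous at the crossing, the maps $\mathbf{v}\mapsto\mathbf{X}_\mathbf{v}(t_{i,\mathbf{v}}^{\pm})$ coincide, and differentiating both sides in $\mathbf{v}$ gives $\mathbf{X}'(t_i^{\pm})+\mathbf{v}(\mathbf{X}(t_i^{\pm}),t_i^{\pm})\,t_i'$ equal on the two sides, from which $\llbracket\mathbf{X}'(t_i)\rrbracket=-\llbracket\mathbf{v}(t_i)\rrbracket\,t_i'$ drops out immediately. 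You instead integrate the trajectory equations across the $O(\varepsilon)$-length overlap interval between $t_i$ and $t_i^\varepsilon$, where the two trajectories sit on opposite sides of $\mathcal{F}_i$, and extract the jump from the mismatch of the integrands there. Your route is more elementary and makes the mechanism by which the velocity jump enters completely explicit, but it carries the burden of the order-of-limits and sign bookkeeping you flag; the paper's route avoids all of that at the price of implicitly assuming differentiability of $\varepsilon\mapsto(t_i^\varepsilon,\mathbf{X}_\varepsilon(t_i^\varepsilon))$, which is exactly what your implicit-function-theorem step supplies. The two arguments are complementary, and your version arguably fills in a regularity detail the paper leaves tacit.
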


\begin{proof}
The time derivative of $\mathbf{X}'$ is given by
\begin{equation*}
\frac{d\mathbf{X}'}{dt} = \frac{d}{dt}\lim_{\varepsilon\rightarrow 0^+}\frac{\mathbf{X}_{\mathbf{v} + \varepsilon\mathbf{w}} - \mathbf{X}_\mathbf{v}}{\varepsilon} = \lim_{\varepsilon\rightarrow 0^+}\frac{(\mathbf{v} + \varepsilon\mathbf{w})(\mathbf{X}_{\mathbf{v} + \varepsilon\mathbf{w}}, t) - \mathbf{v}(\mathbf{X}_\mathbf{v}(t), t)}{\varepsilon},
\end{equation*}
where we recall the pathline equations the trajectories satisfy. Thus,
\begin{align*}
\frac{d\mathbf{X}'}{dt} & = \lim_{\varepsilon\rightarrow 0^+}\frac{(\mathbf{v} + \varepsilon\mathbf{w})(\mathbf{X}_{\mathbf{v} + \varepsilon\mathbf{w}}, t) - \mathbf{v}(\mathbf{X}_\mathbf{v}(t), t)}{\varepsilon}\\
& = \lim_{\varepsilon\rightarrow 0^+}\frac{\mathbf{v}(\mathbf{X}_{\mathbf{v} + \varepsilon\mathbf{w}}(t), t) - \mathbf{v}(\mathbf{X}_\mathbf{v}(t), t)}{\varepsilon} + \mathbf{w}(\mathbf{X}_\mathbf{v}(t), t)\\
& = \lim_{\varepsilon\rightarrow 0^+}\frac{\mathbf{v}(\mathbf{X}_\mathbf{v}(t) + \varepsilon\mathbf{X}'(t) + o(\varepsilon), t) - \mathbf{v}(\mathbf{X}_\mathbf{v}(t), t)}{\varepsilon} + \mathbf{w}(\mathbf{X}_\mathbf{v}(t), t)\\
& = \lim_{\varepsilon\rightarrow 0^+}\frac{[\nabla\mathbf{v}(\mathbf{X}_\mathbf{v}(t), t)]\epsilon\mathbf{X}'(t) + o(\varepsilon)}{\varepsilon} + \mathbf{w}(\mathbf{X}_\mathbf{v}(t), t)\\
& = [\nabla\mathbf{v}(\mathbf{X}_\mathbf{v}(t), t)]\mathbf{X}'(t) + \mathbf{w}(\mathbf{X}_\mathbf{v}(t), t),
\end{align*}
i.e., for all $t\in\mathcal{I}\setminus\{t_i\}$ (so that $\nabla\mathbf{v}(\mathbf{X}_\mathbf{v}(t), t)$ exists away from the discontinuities),
\begin{equation*}
\frac{d\mathbf{X}'}{dt} - [\nabla\mathbf{v}(\mathbf{X}_\mathbf{v}(t), t)]\mathbf{X}'(t) = \mathbf{w}(\mathbf{X}_\mathbf{v}(t), t).
\end{equation*}
The initial condition follows easily as
\begin{equation*}
\mathbf{X}'(0) = \lim_{\varepsilon\rightarrow 0^+}\frac{\mathbf{X}_{\mathbf{v} + \varepsilon\mathbf{w}}(0) - \mathbf{X}_\mathbf{v}(0)}{\varepsilon} = \lim_{\varepsilon\rightarrow 0^+}\frac{\mathbf{x}_0 - \mathbf{x}_0}{\varepsilon} = \mathbf{0}.
\end{equation*}
Although the velocity $\mathbf{v}$ has discontinuities, we still require that the trajectory $\mathbf{X}_\mathbf{v}$ is continuous. Hence, we have the coupling conditions between the two maps:
\begin{equation*}
(\mathbf{v}\mapsto\mathbf{X}_\mathbf{v}(t_i^+)) = (\mathbf{v}\mapsto\mathbf{X}_\mathbf{v}(t_i^-))\;\;\;\;\forall i.
\end{equation*}
Taking the Gâteaux derivative of each side (i.e., $(d/d\varepsilon)(\cdot)(\mathbf{v} + \varepsilon\mathbf{w})$, as $\varepsilon\rightarrow 0$) gives
\begin{equation*}
\mathbf{X}'(t_i^+) + \frac{d\mathbf{X}(t_i^+)}{dt}t_i' = \mathbf{X}'(t_i^-) + \frac{d\mathbf{X}(t_i^-)}{dt}t_i'\;\;\;\;\forall i.
\end{equation*}
Thus,
\begin{equation*}
\mathbf{X}'(t_i^+) + \mathbf{v}(\mathbf{X}_\mathbf{v}(t_i^+), t_i^+)t_i' = \mathbf{X}'(t_i^-) + \mathbf{v}(\mathbf{X}_\mathbf{v}(t_i^-), t_i^-)t_i'\;\;\;\;\forall i;
\end{equation*}
rearranging gives
\begin{equation*}
\llbracket\mathbf{X}'(t_i)\rrbracket = -\llbracket\mathbf{v}(t_i)\rrbracket t_i'.
\end{equation*}
The expression for $t_i' \equiv \partial_\mathbf{v}t_{i,\mathbf{v}}(\mathbf{w})$, given by (\ref{ti_label}), follows similarly to the proof given for the following Lemma \ref{final}.
\end{proof}
We note as well that a variational approach can be used instead to prove Lemma \ref{lemma1}.
For use in Lemma \ref{final}, consider the change in exit-time, or time-of-flight, due to a change in the velocity, given by
\begin{equation*}
T'\equiv \partial_\mathbf{v}T_\mathbf{v}(\mathbf{w}) := \lim_{\varepsilon\rightarrow 0^+}\frac{T_{\mathbf{v} + \varepsilon\mathbf{w}} - T_\mathbf{v}}{\varepsilon}.
\end{equation*}

\begin{lemma}\label{final}
The derivative $\mathbf{X}'(T_\mathbf{v})$ satisfies
\begin{equation*}
\mathbf{X}'(T_\mathbf{v})\cdot\mathbf{n} = -T'\mathbf{v}(\mathbf{X}_\mathbf{v}(T_\mathbf{v}), T_\mathbf{v})\cdot\mathbf{n},
\end{equation*}
with $\mathbf{n} \equiv \mathbf{n}(\mathbf{X}_\mathbf{v}(T_\mathbf{v}))$.
\end{lemma}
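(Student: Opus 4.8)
The plan is to exploit the defining geometric property of the exit point. Because $\partial\Omega$ is assumed locally flat at $\mathbf{X}_\mathbf{v}(T_\mathbf{v})$, there is a neighbourhood of this point in which the boundary coincides with the affine hyperplane $\{\mathbf{x} : \mathbf{x}\cdot\mathbf{n} = c\}$, where $c := \mathbf{X}_\mathbf{v}(T_\mathbf{v})\cdot\mathbf{n}$ and $\mathbf{n}\equiv\mathbf{n}(\mathbf{X}_\mathbf{v}(T_\mathbf{v}))$ is the (unique) outward normal there. Both the unperturbed trajectory and the perturbed trajectory $\mathbf{X}_{\mathbf{v}+\varepsilon\mathbf{w}}$ meet this hyperplane at their respective first-exit times, so for all sufficiently small $\varepsilon>0$ the two scalar constraints $\mathbf{X}_\mathbf{v}(T_\mathbf{v})\cdot\mathbf{n}=c$ and $\mathbf{X}_{\mathbf{v}+\varepsilon\mathbf{w}}(T_{\mathbf{v}+\varepsilon\mathbf{w}})\cdot\mathbf{n}=c$ hold simultaneously. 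Subtracting these, dividing by $\varepsilon$, and passing to the limit $\varepsilon\to0^+$ isolates the Gâteaux derivative of the composite map $\varepsilon\mapsto\mathbf{X}_{\mathbf{v}+\varepsilon\mathbf{w}}(T_{\mathbf{v}+\varepsilon\mathbf{w}})$ tested against $\mathbf{n}$.

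The second step is to evaluate that derivative via a first-order expansion in $\varepsilon$. Writing $\mathbf{X}_{\mathbf{v}+\varepsilon\mathbf{w}}(t)=\mathbf{X}_\mathbf{v}(t)+\varepsilon\mathbf{X}'(t)+o(\varepsilon)$, with $\mathbf{X}'$ the trajectory derivative supplied by Lemma \ref{lemma1}, together with $T_{\mathbf{v}+\varepsilon\mathbf{w}}=T_\mathbf{v}+\varepsilon T'+o(\varepsilon)$, and then Taylor-expanding $\mathbf{X}_\mathbf{v}$ about $t=T_\mathbf{v}$ using $\tfrac{d\mathbf{X}_\mathbf{v}}{dt}(T_\mathbf{v})=\mathbf{v}(\mathbf{X}_\mathbf{v}(T_\mathbf{v}),T_\mathbf{v})$, I would collect the $O(\varepsilon)$ terms to obtain
\begin{equation*}
\mathbf{X}_{\mathbf{v}+\varepsilon\mathbf{w}}(T_{\mathbf{v}+\varepsilon\mathbf{w}}) = \mathbf{X}_\mathbf{v}(T_\mathbf{v}) + \varepsilon\big[\mathbf{X}'(T_\mathbf{v}) + T'\,\mathbf{v}(\mathbf{X}_\mathbf{v}(T_\mathbf{v}),T_\mathbf{v})\big] + o(\varepsilon).
\end{equation*}
Taking the inner product with $\mathbf{n}$, the constant terms cancel by the two boundary constraints; dividing by $\varepsilon$ and letting $\varepsilon\to0^+$ then forces the inner product of the bracketed quantity with $\mathbf{n}$ to vanish, which is precisely the claimed identity $\mathbf{X}'(T_\mathbf{v})\cdot\mathbf{n}=-T'\,\mathbf{v}(\mathbf{X}_\mathbf{v}(T_\mathbf{v}),T_\mathbf{v})\cdot\mathbf{n}$.

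The main obstacle is justifying the first-order expansions themselves, and in particular that the perturbed exit time $T_{\mathbf{v}+\varepsilon\mathbf{w}}$ is well-defined for small $\varepsilon$ and admits the expansion $T_\mathbf{v}+\varepsilon T'+o(\varepsilon)$, i.e.\ that $T'$ exists. This is exactly where the transversality hypothesis $\mathbf{v}(\mathbf{X}_\mathbf{v}(T_\mathbf{v}),T_\mathbf{v})\cdot\mathbf{n}\neq0$ is indispensable: it ensures the trajectory crosses the locally flat boundary cleanly rather than grazing it, so that an implicit-function argument applied to the scalar exit equation $\mathbf{X}_{\mathbf{v}+\varepsilon\mathbf{w}}(T)\cdot\mathbf{n}=c$ (whose $T$-derivative at $\varepsilon=0$ is precisely the nonzero quantity $\mathbf{v}\cdot\mathbf{n}$) yields a differentiable branch $\varepsilon\mapsto T_{\mathbf{v}+\varepsilon\mathbf{w}}$; that same computation simultaneously reproduces the explicit formula for $T'$, and, repeated at an interior face, furnishes the expression (\ref{ti_label}) for $t_i'$ that was deferred from the proof of Lemma \ref{lemma1}. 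A secondary technical point is the continuity of $\mathbf{X}'$ near $T_\mathbf{v}$, needed to replace $\mathbf{X}'(T_{\mathbf{v}+\varepsilon\mathbf{w}})$ by $\mathbf{X}'(T_\mathbf{v})$ up to $o(1)$; this is guaranteed by the assumption that no interior discontinuity face is crossed in a neighbourhood of the final exit time.
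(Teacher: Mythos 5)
Your proposal is correct and follows essentially the same route as the paper: both arguments use the local flatness of $\partial\Omega$ to place the unperturbed and perturbed exit points on the same hyperplane orthogonal to $\mathbf{n}$, then combine the first-order expansions $\mathbf{X}_{\mathbf{v}+\varepsilon\mathbf{w}} = \mathbf{X}_\mathbf{v} + \varepsilon\mathbf{X}' + o(\varepsilon)$ and $T_{\mathbf{v}+\varepsilon\mathbf{w}} = T_\mathbf{v} + \varepsilon T' + o(\varepsilon)$ with the trajectory ODE to cancel the constant terms and extract the identity. Your additional remarks on justifying the existence of $T'$ via transversality and the implicit function theorem go beyond what the paper records, but they do not alter the argument.
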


\begin{proof}
If we assume that the boundary $\partial\Omega$ is locally flat at the exit-point $\mathbf{X}(T_\mathbf{v})$, then this means that for sufficiently small $\varepsilon$ we have
\begin{equation*}
(\mathbf{X}_{\mathbf{v} + \varepsilon\mathbf{w}}(T_\mathbf{v}) - \mathbf{X}_{\mathbf{v}}(T_\mathbf{v}))\cdot\mathbf{n} = (\mathbf{X}_{\mathbf{v} + \varepsilon\mathbf{w}}(T_{\mathbf{v}}) - \mathbf{X}_{\mathbf{v} + \varepsilon\mathbf{w}}(T_{\mathbf{v} + \varepsilon\mathbf{w}}))\cdot\mathbf{n}.
\end{equation*}
Hence,
\begin{align*}
\mathbf{X}'(T_\mathbf{v})\cdot\mathbf{n} & = \lim_{\varepsilon\rightarrow 0^+}\frac{\mathbf{X}_{\mathbf{v} + \varepsilon\mathbf{w}}(T_\mathbf{v}) - \mathbf{X}_\mathbf{v}(T_\mathbf{v})}{\varepsilon}\cdot\mathbf{n}\\
& = \lim_{\varepsilon\rightarrow 0^+}\frac{\mathbf{X}_{\mathbf{v} + \varepsilon\mathbf{w}}(T_{\mathbf{v}}) - \mathbf{X}_{\mathbf{v} + \varepsilon\mathbf{w}}(T_{\mathbf{v} + \varepsilon\mathbf{w}})}{\varepsilon}\cdot\mathbf{n}\\
& = \lim_{\varepsilon\rightarrow 0^+}\frac{\mathbf{X}_{\mathbf{v} + \varepsilon\mathbf{w}}(T_{\mathbf{v}}) - \mathbf{X}_{\mathbf{v} + \varepsilon\mathbf{w}}(T_\mathbf{v} + \varepsilon T' + o(\varepsilon))}{\varepsilon}\cdot\mathbf{n}\\
& = \lim_{\varepsilon\rightarrow 0^+}\frac{-\frac{d\mathbf{X}_{\mathbf{v} + \varepsilon\mathbf{w}}}{dt}(T_\mathbf{v})(\varepsilon T' + o(\varepsilon))}{\varepsilon}\cdot\mathbf{n}\\
& = \lim_{\varepsilon\rightarrow 0^+}\frac{-(\mathbf{v} + \varepsilon\mathbf{w})(\mathbf{X}_{\mathbf{v} + \varepsilon\mathbf{w}}(T_\mathbf{v}), T_\mathbf{v})(\varepsilon T' + o(\varepsilon))}{\varepsilon}\cdot\mathbf{n}\\
& = -T'\mathbf{v}(\mathbf{X}_\mathbf{v}(T_\mathbf{v}), T_\mathbf{v}))\cdot\mathbf{n}.
\end{align*}
\end{proof}
Thus, we are now able to prove the main result. Firstly, note that
\begin{equation*}
T'[\mathbf{v}](\mathbf{w}) = \lim_{\varepsilon\rightarrow 0}\frac{T_{\mathbf{v} + \varepsilon\mathbf{w}} - T_\mathbf{v}}{\varepsilon} = T'.
\end{equation*}

% --- NEW SUBSUBSECTION --- %

\subsubsection{Proof of Theorem \ref{mainresult}}
\begin{proof}
From Lemma \ref{final} and (\ref{adjointIVP}) we have
\begin{equation*}
T' = -\frac{\mathbf{X}'(T_\mathbf{v})\cdot\mathbf{n}}{\mathbf{v}(\mathbf{X}_\mathbf{v}(T_\mathbf{v}), T_\mathbf{v})\cdot\mathbf{n}} = \mathbf{X}'(T_\mathbf{v})\cdot\mathbf{Z}(T_\mathbf{v}).
\end{equation*}
Since from (\ref{adjointIVP}) we know that $\mathcal{L}_\mathbf{v}^*(\mathbf{Z}(t)) = 0$ away from the jump times $\{t_i\}$, we have
\begin{equation*}
T' \equiv \mathbf{X}'(T_\mathbf{v})\cdot\mathbf{Z}(T_\mathbf{v}) = \mathbf{X}'(T_\mathbf{v})\cdot\mathbf{Z}(T_\mathbf{v}) + \sum_i\int_{t_{i-1}}^{t_i}\mathcal{L}_\mathbf{v}^*\mathbf{Z}(t)\cdot\mathbf{X}'(t)\,dt.
\end{equation*}
Integrating by parts reveals that
\begin{align*}
T' & \equiv \sum_i\int_{t_{i-1}}^{t_i}\mathbf{Z}(t)\cdot\mathcal{L}_\mathbf{v}X'\,dt + \sum_i(\mathbf{Z}(t_i^+)\cdot\mathbf{X}'(t_i^+) - \mathbf{Z}(t_i^-)\cdot\mathbf{X}'(t_i^-)) + \mathbf{Z}(0)\cdot\mathbf{X}'(0)\\
& = \sum_i\int_{t_{i-1}}^{t_i}\mathbf{Z}(t)\cdot\mathbf{w}(\mathbf{X}_\mathbf{v}(t), t)\,dt + \sum_i(\mathbf{Z}(t_i^+)\cdot\mathbf{X}'(t_i^+) - \mathbf{Z}(t_i^-)\cdot\mathbf{X}'(t_i^-)),
\end{align*}
since from (\ref{derivjump}) in Lemma \ref{lemma1}  we have that $\mathcal{L}_\mathbf{v}(\mathbf{X}'(t)) = \mathbf{w}(\mathbf{X}_\mathbf{v}(t), t)$ and $\mathbf{X}'(0) = \mathbf{0}$. The jump condition in (\ref{derivjump}) for $\mathbf{X}'$ can be rearranged to obtain the expression
\begin{equation*}
\mathbf{X}'(t_i^+) = \mathbf{X}'(t_i^-) + \llbracket\mathbf{v}(t_i)\rrbracket\frac{\mathbf{X}'(t_i^-)\cdot\mathbf{n}_i^-}{\mathbf{v}(\mathbf{X}_\mathbf{v}(t_i^-), t_i^-)\cdot\mathbf{n}_i^-}.
\end{equation*}
Thereby,
\begin{align*}
T' \equiv & \sum_i\int_{t_{i-1}}^{t_i}\mathbf{Z}(t)\cdot\mathbf{w}(\mathbf{X}_\mathbf{v}(t), t)\,dt\\ & + \sum_i\left(\mathbf{Z}(t_i^+)\cdot\left(\mathbf{X}'(t_i^-) + \llbracket\mathbf{v}(t_i)\rrbracket\frac{\mathbf{X}'(t_i^-)\cdot\mathbf{n}_i^-}{\mathbf{v}(\mathbf{X}_\mathbf{v}(t_i^-), t_i^-)\cdot\mathbf{n}_i^-}\right) - \mathbf{Z}(t_i^-)\cdot\mathbf{X}'(t_i^-)\right).
\end{align*}
Notice that
\begin{align*}
& \mathbf{Z}(t_i^+)\cdot\left(\mathbf{X}'(t_i^-) + \llbracket\mathbf{v}(t_i)\rrbracket\frac{\mathbf{X}'(t_i^-)\cdot\mathbf{n}_i^-}{\mathbf{v}(\mathbf{X}_\mathbf{v}(t_i^-), t_i^-)\cdot\mathbf{n}_i^-}\right) - \mathbf{Z}(t_i^-)\cdot\mathbf{X}'(t_i^-)\\ & = \left(\mathbf{Z}(t_i^+) - \mathbf{Z}(t_i^-) + \frac{\mathbf{Z}(t_i^-)\llbracket\mathbf{v}(t_i)\rrbracket}{\mathbf{v}(\mathbf{X}_\mathbf{v}(t_i^-), t_i^-)\cdot\mathbf{n}_i^-}\cdot\mathbf{n}_i^-\right)\cdot\mathbf{X}'(t_i^-)\\
& = (\llbracket\mathbf{Z}(t_i)\rrbracket - \llbracket\mathbf{Z}(t_i)\rrbracket)\cdot\mathbf{X}'(t_i^-) = \mathbf{0}.
\end{align*}
This implies that
\begin{equation*}
T' \equiv \sum_i\int_{t_{i-1}}^{t_i}\mathbf{Z}(t)\cdot\mathbf{w}(\mathbf{X}_\mathbf{v}(t), t)\,dt = \int_0^{T_\mathbf{v}}\mathbf{Z}(t)\cdot\mathbf{w}(\mathbf{X}_\mathbf{v}(t), t)\,dt,
\end{equation*}
thus completing the proof.
\end{proof}

% --- NEW SUBSECTION --- %

\subsection{Application to Darcy Flow}\label{DarcyContextSec}
For a groundwater flow model governed by Darcy's equations (\ref{DL})--(\ref{Nbc}), physical (non-sorbing, non-dispersive, purely advective transport based) particle trajectories are due to a velocity field known as the transport velocity, which relates the Darcy velocity $\mathbf{u}$ and the porosity, $\phi$, of the surrounding rock via
$
\mathbf{u}_T = \nicefrac{\mathbf{u}}{\phi}.
$
Indeed, the travel time along particle trajectories driven by this velocity field are those that should be considered in the travel time functional (\ref{TT}). With $\mathbf{x}_0$ the initial burial point, our quantity of interest can be expressed either by the functionals $\mathfrak{T}(\cdot\,; \mathbf{x}_0)$ or $T(\cdot\,; \mathbf{x}_0)$, where, in particular, the former is given by
\begin{equation}\label{traveltime}
\mathfrak{T}(\mathbf{u}; \mathbf{x}_0) = T(\mathbf{u}_T; \mathbf{x}_0) = \inf\{t > 0 : \mathbf{X}_{\mathbf{u}_T}(t)\not\in\Omega\},
\end{equation}
and it is indeed the trajectory $\mathbf{X}_{\mathbf{u}_T}$ that should be considered $(\mathbf{v} \leftrightarrow \mathbf{u}_T)$ in Theorem~\ref{mainresult}, and the functional $T(\mathbf{u}_T; \mathbf{x}_0)$ should be considered in the context of the \textit{a posteriori} error estimation presented in Section \ref{DWRsec}.

Furthermore, a simple application of a generalised chain rule allows us to deduce an expression for the Gâteaux derivative of the functional $\mathfrak{T}(\cdot\,;\mathbf{x}_0)$, given by
\begin{equation}\label{chain}
\mathfrak{T}'[\mathbf{v}](\mathbf{w}) = T'[\mathbf{v}_T](\mathbf{w}_T).
\end{equation}

% --- NEW SUBSECTION --- %

\subsection{Implementation Details}\label{ImplementationSec}
In this section, let $\mathbf{u}_h\in\mathbf{V}_h$ and $\mathbf{v}\in\mathbf{V}$ be generic velocity fields. For example, $\mathbf{u}_h$ could be the solution of the discrete problem (\ref{DiscDarcy}), while $\mathbf{v}$ could be a basis function of $\mathbf{W}_h\subset\mathbf{V}$, $\mathbf{W}_h\not\subset\mathbf{V}_h$, so that the derivative
\begin{equation}\label{derivative}
T'[\mathbf{u}_h](\mathbf{v}) = \int_0^{T(\mathbf{u}_h)}\mathbf{Z}(t)\cdot\mathbf{v}(\mathbf{X}_{\mathbf{u}_h}(t))\,dt
\end{equation}
is required for computing the numerical solution to the approximate linearised adjoint problem (\ref{DWRlin}). Of course, if $\mathbf{u}_h$ is the discrete Darcy velocity satisfying (\ref{DiscDarcy}) then the derivative $\mathfrak{T}'[\mathbf{u}_h](\mathbf{v})$ can be evaluated combining this section with (\ref{chain}).

For simplicity of presentation, we restrict this discussion to $d=2$, but we stress that the generalisation to $d=3$ follows directly. In this setting, we recall that  $\mathscr{T}_h$ is a shape-regular triangulation of $\overline{\Omega}$ for which $\mathbf{u}_h$ is discontinuous across the element interfaces intersected by the particle trajectory $\mathbf{X}_{\mathbf{u}_h}(t)$ at the times $\{t_i\}_{i=1}^N$; proceed with the assumptions stated in Theorem \ref{mainresult}. 
Denote by $\mathbb{T}_h = \{\kappa_i\}_{i=1}^{N}\subset\mathscr{T}_h$ the ordered list of elements intersected by the particle trajectory. Here, we allow for repetitions if the trajectory re-enters the same element, where it will appear multiple times in $\mathbb{T}_h$ with different labels. In order to obtain the adjoint variable $\mathbf{Z}_{\mathbf{u}_h} \equiv \mathbf{Z}$, we can solve the IVP (\ref{adjointIVP}) in a element-by-element manner. That is, starting from the intersection point with the boundary of $\mathbf{X}_{\mathbf{u}_h}(t)$, we trace the particle trajectory backwards through its intersected elements, and solve for $\mathbf{Z}$ on each time interval that the trajectory is residing in that element. More precisely, consider the final element $\kappa_N$. The trajectory $\mathbf{X}_{\mathbf{u}_h}(t)$ occupies this element for $t\in(t_{N-1}, t_N)$, where $t_N \equiv T(\mathbf{u}_h; \mathbf{x}_0)$ is the travel time. Restricting to this time interval, the adjoint variable $\mathbf{Z}(t)$ solves the IVP
\begin{equation*}
-\frac{d\mathbf{Z}(t)}{dt} - [\nabla\mathbf{u}_h(\mathbf{X}_{\mathbf{u}_h}(t))]^\top\mathbf{Z}(t) = \mathbf{0}.
\end{equation*}
For times $t\in(t_{N-1}, t_N)$, we have $\mathbf{X}_{\mathbf{u}_h}(t)\in\kappa_N$ and within this element $\mathbf{u}_h$ is a polynomial function. This means that together with the given final--time condition
\begin{equation*}
\mathbf{Z}(t_N) = -\frac{\mathbf{n}}{\mathbf{u}_h(\mathbf{X}(t_N))\cdot\mathbf{n}},
\end{equation*}
we can solve for $\mathbf{Z}$ within this time interval, via an exact method or using some approximate time-stepping technique for ODEs. For example, if $\mathbf{u}_h$ is a piecewise linear function on the triangulation $\mathscr{T}_h$ (e.g. a lowest order RT or BDM function) then we may solve for $\mathbf{Z}$ directly via matrix exponentials. Indeed, the gradient of such a function will be piecewise constant on the same triangulation.

In such a case, denote by $\mathbf{a} = (\alpha_x, \alpha_y)^\top$, $\mathbf{b} = (\beta_x, \beta_y)^\top$ and $\mathbf{c} = (\gamma_x, \gamma_y)^\top$ the real coefficients such that on $\kappa_i\in\mathbb{T}_h$
\begin{equation*}
\mathbf{u}_h\vert_{\kappa_i} \equiv \begin{bmatrix}\alpha_x + \beta_x x + \gamma_x y\\ \alpha_y + \beta_y x + \gamma_y y\end{bmatrix}.
\end{equation*}
Then,
$\mathbf{a} = \mathbf{u}_h\vert_{\kappa_i}(0, 0)$, 
$\mathbf{b} = \mathbf{u}_h\vert_{\kappa_i}(1, 0) - \mathbf{a}$, 
$\mathbf{c} = \mathbf{u}_h\vert_{\kappa_i}(0, 1) - \mathbf{a}$,
and the gradient of $\mathbf{u}_h$ restricted to $\kappa_i$ is given by
\begin{equation*}
\nabla\mathbf{u}_h\vert_{\kappa_i} = \begin{bmatrix} \mathbf{b} & \mathbf{c}\end{bmatrix} = \begin{bmatrix}\beta_x & \gamma_x\\ \beta_y & \gamma_y\end{bmatrix}.
\end{equation*}
Denoting by $\Upsilon_i = [\nabla\mathbf{u}_h(\mathbf{X}_{\mathbf{u}_h}(t))]^\top\vert_{\kappa_i}$ the gradient transposed for each $i$, we then have
\begin{equation}\label{AdjointSolutionFinal}
\mathbf{Z}(t) = \texttt{exp}(\Upsilon_N(t_N - t))\mathbf{Z}(t_N)\;\;\;\;\forall t\in(t_{N-1}, t_N].
\end{equation}
By putting $t = t_{N-1}$ in (\ref{AdjointSolutionFinal}), we can evaluate $\mathbf{Z}(t_{N-1}^+)$. The jump condition in (\ref{adjointIVP}) can be rearranged for the value of $\mathbf{Z}$ at this time before the particle trajectory $\mathbf{X}_{\mathbf{u}_h}(t)$ crosses into the element $\kappa_N$, forwards in time, which is given by
\begin{equation}\label{rearrange}
\mathbf{Z}(t_{N-1}^-) = \mathbf{Z}(t_{N-1}^+) + \frac{\mathbf{Z}(t_{N-1}^+)\cdot\llbracket\mathbf{u}_h(t_{N-1})\rrbracket\mathbf{n}_{N-1}}{\mathbf{u}_h(\mathbf{X}(t_{N-1}))\cdot\mathbf{n}_{N-1}}.
\end{equation}
We see that all of the terms on the right-hand-side of the equality in (\ref{rearrange}) are known (also, the orientation of the normal vector $\mathbf{n}_{N-1}$ to the element interface does not matter since it appears both in the numerator and demoninator). On the next (or previous, from the perspective of the particle trajectory) element, $\kappa_{N-1}$, we restrict to the time interval $(t_{N-2}, t_{N-1})$ and solve similarly. Now, using $\mathbf{Z}(t_{N-1}^-)$ as the final--time condition to obtain
\begin{equation*}
\mathbf{Z}(t) = \texttt{exp}(\Upsilon_{N-1}(t_{N-1} - t))\mathbf{Z}(t_{N-1}^-)\;\;\;\;\forall t\in(t_{N-2}, t_{N-1}).
\end{equation*}
One then follows this procedure for all time intervals up to and including $(0, t_1)$. In general, for a piecewise linear velocity field $\mathbf{u}_h$, we may hence write
\begin{equation}\label{AdjointSolutionLinear}
\mathbf{Z}(t) = \texttt{exp}(\Upsilon_i(t_i - t))\mathbf{Z}(t_i^-)\;\;\;\;\forall t\in(t_{i-1}, t_i).
\end{equation}
When $\mathbf{u}_h$ is, for example, piecewise polynomial with a higher degree, or some other general function, then (\ref{AdjointSolutionLinear}) does not apply since the matrices $\Upsilon_i$ will not be constant. Instead, one could employ a time--stepping technique within each time interval to solve for the adjoint solution $\mathbf{Z}(t)$; time--stepping from $\mathbf{Z}(t_i^-)$ until $\mathbf{Z}(t_{i-1}^+)$, using this to generate the next starting position $\mathbf{Z}(t_{i-1}^-)$, and so forth.

We note as well that the integral (\ref{derivative}) can be reduced to a sum of integrals over these time-intervals for which the trajectory intersects the support of the function $\mathbf{v}$. This is especially useful when $\mathbf{v}$ is, for example, a finite element basis function, which has support on only a few elements of which either all or just one might intersect the trajectory. Because of this, and the need to compute $\mathbf{Z}(t)$ in the fashion stated above, the right-hand-side vector in (\ref{DWRdisclin}) can easily be assembled by looping over these intersected elements in the same backwards fashion as described here.

% --- NEW SECTION --- %

\section{Numerical Examples}\label{numexp}
The purpose of this section is to utilise the linearisation result stated in Theorem \ref{mainresult} within the context of goal-oriented adaptivity. Here, Darcy's equations (\ref{DL})--(\ref{Nbc}) model the flow of groundwater as a saturated porous medium; we are interested (cf. Sections \ref{prelims}, \ref{DWRsec} and \ref{DarcyContextSec}) in the accurate estimation of the discretisation error induced by numerically approximating the travel time $\mathfrak{T}(\mathbf{u}; \mathbf{x}_0)$, for a given burial point $\mathbf{x}_0\in\Omega$. For simplicity we assume throughout this section that $d=2$.

% --- NEW SUBSECTION --- %

\subsection{Approximation Spaces and Mesh Adaptivity}
Adaptive mesh refinement, and goal--oriented error estimation, will be performed for the accurate computation of the travel time functional (\ref{traveltime}) when the primal solution $(\mathbf{u}, p)\in\mathbf{H}$ to (\ref{SP3}) is approximated by the solution $(\mathbf{u}_h, p_h)\in\mathbf{H}_h$ to (\ref{DiscDarcy}). We wish to measure
\begin{equation}\label{ExpEst}
{\mathcal E}_h^\mathfrak{T} = \mathfrak{T}(\mathbf{u}; \mathbf{x}_0) - \mathfrak{T}(\mathbf{u}_h; \mathbf{x}_0) \approx \sum_{\kappa\in\mathscr{T}_h}\eta_\kappa
\end{equation}
on each of the computational meshes employed, where the indicators are those defined in Theorem \ref{DarcyLocal}. For mesh adaptivity we utilise a fixed--fraction marking strategy, with a refinement selection of $\texttt{REF} = 10\%$, together with the standard red--green, regular, refinement strategy for triangular elements.

We begin by stating the definition of the approximation space $\mathbf{H}_h$. Here, we employ the Brezzi--Douglas--Marini elements for the approximation of the Darcy velocity, and discontinuous piecewise polynomials for the approximation of the pressure (cf. Section \ref{MFEMsec}). To this end, we define the following spaces, where $\mathscr{T}_h$ is the usual shape--regular triangulation of the domain $\Omega\subset\mathbb{R}^2$:
\begin{align*}
BDM_k(\kappa) & := [\mathbb{P}_k(\kappa)]^2,\\
BDM_k(\Omega, \mathscr{T}_h) & := \{\mathbf{v}\in H(\text{div}, \Omega) : \mathbf{v}\vert_\kappa\in BDM_k(\kappa)\;\;\forall\kappa\in\mathscr{T}_h\}.
\end{align*}
Then, the approximation space $\mathbf{H}_{h, k} \equiv \mathbf{V}_{h, k}\times\Pi_{h, k}$ is defined via
\begin{align*}
\mathbf{V}_{h, k} & := \{\mathbf{v}\in BDM_{k + 1}(\Omega, \mathscr{T}_h) : (\mathbf{v}\cdot\mathbf{n})\vert_{\partial\Omega_N} = 0\},\\
\Pi_{h, k} & := \{\varphi\in L^2(\Omega) : \varphi\vert_\kappa\in\mathbb{P}_k(\kappa)\,\,\forall\kappa\in\mathscr{T}_h\}.
\end{align*}
The stability of these pairs of spaces, in the inf--sup sense, is discussed, for example, in \cite{MFEM} for any choice of $k \geq 0$.

In our examples we consider the primal and adjoint approximations $(\mathbf{u}_h, p_h)\in\mathbf{H}_{h, 0}$ and
$(\mathbf{z}_h, r_h)\in\mathbf{H}_{h, 1}$, where $(\mathbf{z}_h, r_h)$ solves the discrete linearised adjoint problem (\ref{DWRdisclin}) with functional $\mathfrak{T}(\,\cdot\,; \mathbf{x}_0)$, approximating the solutions $(\mathbf{z}, r)\in\mathbf{H}$ to the problem (\ref{DWRformal}).
%The adaptive procedure used is outlined below.
%\,\\
%\begin{itemize}
%	\item[1)] Define an initial mesh $\mathscr{T}_h$ of the computational domain $\overline{\Omega}$ and choose a burial point $\mathbf{x}_0$ and a tolerance $\texttt{TOL} > 0$.\\
%	\item[2)] For $\mathscr{T}_h$, define $\mathbf{H}_{h, 0}$ and solve the discrete problem (\ref{DiscDarcy}), to obtain the finite element approximation $(\mathbf{u}_h, p_h)\in\mathbf{H}_{h, 0}$.\\
%	\item[3)] Evaluate the approximate travel time $\mathfrak{T}(\mathbf{u}_h; \mathbf{x}_0)\approx\mathfrak{T}(\mathbf{u}; \mathbf{x}_0)$.\\
%	\item[4)] For $\mathscr{T}_h$, define $\mathbf{H}_{h, 1}$ and solve the discrete adjoint problem (\ref{DWRdisclin}), to compute the finite element approximation $(\mathbf{z}_h, r_h)\in\mathbf{H}_{h, 1}$.\\
%	\item[5)] Estimate the travel time error with $\sum_{\kappa\in\mathscr{T}_h}\eta_\kappa\approx\mathfrak{T}(\mathbf{u}; \mathbf{x}_0) - \mathfrak{T}(\mathbf{u}_h; \mathbf{x}_0)$, with element indicators $\eta_\kappa$ given by Theorem \ref{DarcyLocal}.\\
%	\item[6)] Check if $\big\vert\sum_{\kappa\in\mathscr{T}_h}\eta_\kappa\big\vert \leq \texttt{TOL}$. If \textbf{YES}, then \textbf{STOP}; if \textbf{NO}, then \textbf{PROCEED}.\\
%	\item[7)] Mark the elements $\kappa\in\mathscr{T}_h$ for refinement according to the user--chosen marking strategy.
%	\item[8)] Refine the current mesh only at these elements; relabel the new mesh $\mathscr{T}_h$.\\
%	\item[9)] Go to step 2.
%\end{itemize}
%\,\\
We recall (cf. Section \ref{DWRsec}) the effectivity index
\begin{equation*}
\theta_h := \frac{\mathfrak{T}(\mathbf{u}; \mathbf{x}_0) - \mathfrak{T}(\mathbf{u}_h; \mathbf{x}_0)}{\sum_{\kappa\in\mathscr{T}_h}\eta_k},
\end{equation*}
which measures how well the error estimate approximates the exact travel time error.

% --- NEW SUBSECTION --- %

\begin{figure}[t!]\label{SimplePath}
\centering
\includegraphics[width=0.35\columnwidth]{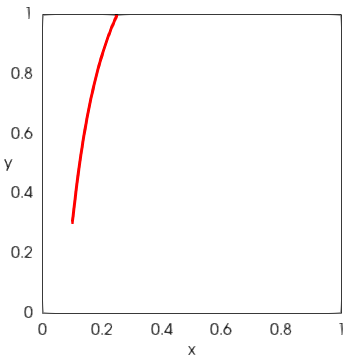}
\caption{Example I: Approximate particle trajectory on the final mesh.}
\end{figure}
\subsection{Example I: A Simple Test Case}
This first example considers a very simple problem for which we know the value of the exact travel time $\mathfrak{T}(\mathbf{u}; \mathbf{x}_0)$. The travel time is approximated on a series of uniformly refined triangulations, in order to validate the proposed error estimate (\ref{ExpEst}).
To this end, let $\Omega = (0, 1)^2$; we impose appropriate boundary conditions, so that the exact Darcy velocity is given by $\mathbf{u} = [\sin(x) \cos(y)]^\top.$ The porosity is set to be $\phi = 1$ everywhere so that the Darcy and transport velocities coincide. Furthermore, the de-coupling of the IVP for the particle trajectory $\mathbf{X_u}(t)$ means that we can evaluate exactly the travel time for some choice of $\mathbf{x}_0\in\Omega$. Selecting $\mathbf{x}_0 = (0.1,\,\,0.3)$ gives
\begin{equation*}
\mathfrak{T}(\mathbf{u}; \mathbf{x}_0) = \log\left(\frac{\tan(1) + \sec(1)}{\tan(0.3) + \sec(0.3)}\right)\approx 0.9216\dots,
\end{equation*} 
cf. Figure~\ref{SimplePath} which depicts the particle trajectory.

\begin{figure}[t!]\label{SimplePlots}
\centering
\includegraphics[width=0.4\columnwidth]{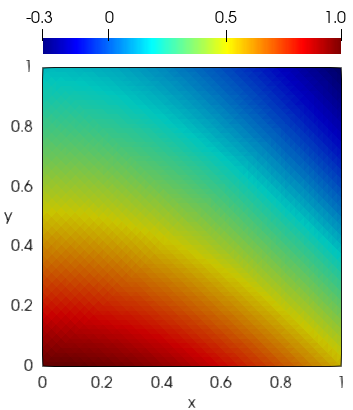}
\includegraphics[width=0.4\columnwidth]{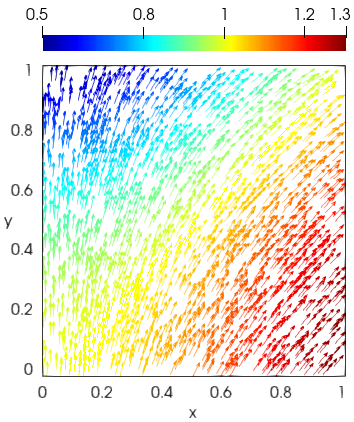}\\
\includegraphics[width=0.4\columnwidth]{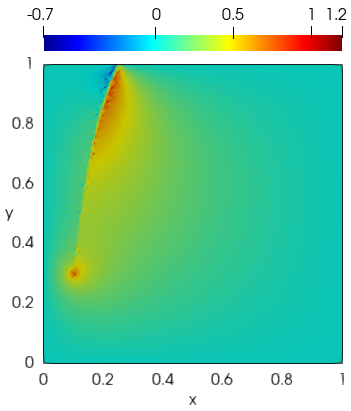}
\includegraphics[width=0.4\columnwidth]{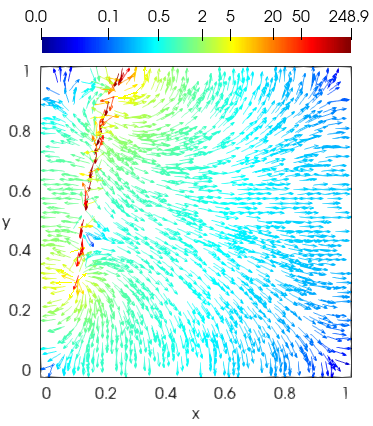}
\caption{Example I: Primal (top) and adjoint (bottom) pressure and velocity approximations on the final mesh.}
\end{figure}

\begin{table}[t!]
{\footnotesize 
  \caption{Example I: Results employing the $BDM_1$ finite element space.}\label{tab:example_I_table}
  \begin{center}
   \begin{tabular}{|c|c|c|c|} \hline
    \bf Number of DOFs & \bf Error & \bf Est. Error & \bf $\theta_h$ \\ \hline
     $20$ &    $-8.274\times 10^{-3}$ & $-8.476\times 10^{-3}  $ & $0.976$ \\ 
     $72$ &    $1.358\times 10^{-3}$ & $1.360\times 10^{-3}$ & $0.998$ \\ 
     $272$ &   $-3.155\times 10^{-5} $ & $-2.818\times 10^{-5} $ & $1.120$ \\ 
     $1056$ &  $-1.894\times 10^{-5} $ & $-1.899\times 10^{-5} $ & $0.997$ \\ 
     $4160$ &  $-2.085\times 10^{-6} $ & $-2.084\times 10^{-6} $ & $1.001$ \\ 
     $16512$ & $-9.310\times 10^{-7} $ & $-9.308\times 10^{-7} $ & $1.000$ \\ \hline
   \end{tabular}
  \end{center}
  }
\end{table}

 The results featured in Table \ref{tab:example_I_table} show the exact travel time error, the error estimate, and the resulting effectivity index on each of the uniform meshes employed for this example. Indeed, here we observe that the effectivity indices are extremely close to unity on each of the meshes, thereby demonstrating that the error estimate accurately predicts the travel time error in this simple example, even on particularly coarse meshes with less than $50$ degrees of freedom. The primal and adjoint pressure and velocity approximations on the final mesh are depicted in Figure~\ref{SimplePlots}.

% --- NEW SUBSECTION --- %

\subsection{Example II: A Two--Layered Geometry}
Similar to Example I, this numerical experiment considers a simple geometry and problem set-up in order to further validate the proposed error estimate (\ref{ExpEst}) under uniform refinement. Here, the domain $\Omega$, pictured in Figure \ref{TwoLayerPath}, is defined by
$
\Omega = \{(x, y)\in(0,\,\, 1)\times (0,\,\, 1) : y + \nicefrac{x}{10} < 1\}.
$
Along the line $y = \nicefrac{1}{2}$ the domain is partitioned into the two sub-domains $\Omega_i$, $i = 1,2$, representing different types of rock. That is, the top layer consists solely of Calder Sandstone, while the bottom containes St. Bees Sandstone. To each of the sub-domains we assign a fixed, constant, permeability and porosity (cf. Example III), given by the dataset used in \cite{cliffe_collis_houston}. 

This example can be considered to be a simpler version of Example III, in which we apply the same boundary conditions. Along the top of the domain we impose atmospheric pressure, and no--flow out of the rest of the boundary. The burial point is chosen to be $\mathbf{x}_0 = (0.1,\,\, 0.1)$ and we set $f = 0$ in Darcy's equations (\ref{DL})--(\ref{Nbc}). Unlike the previous example, the exact travel time $\mathfrak{T}(\mathbf{u}; \mathbf{x}_0)$ is not known in this case; instead, we use an approximation on the final mesh.

\begin{figure}[t!]\label{TwoLayerPath}
\centering
\includegraphics[width=0.35\columnwidth]{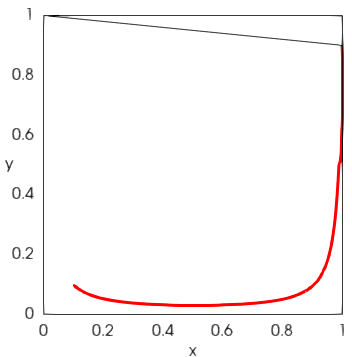}
\caption{Example II: Approximate particle trajectory on the final mesh.}
\end{figure}

\begin{figure}[t!]\label{TwoLayerPlots}
\centering
\includegraphics[width=0.4\columnwidth]{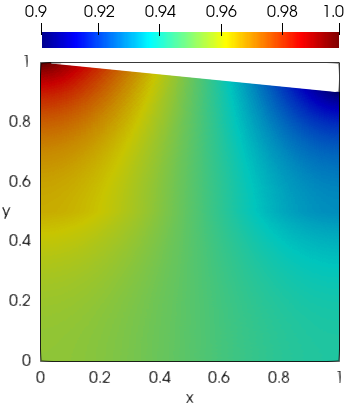}
\includegraphics[width=0.4\columnwidth]{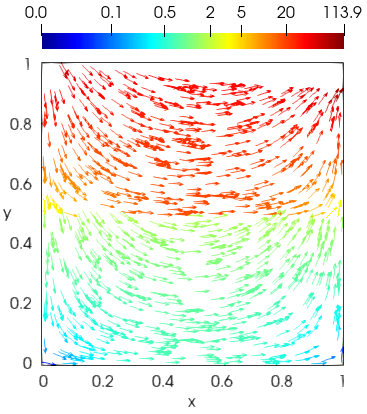}\\
\includegraphics[width=0.4\columnwidth]{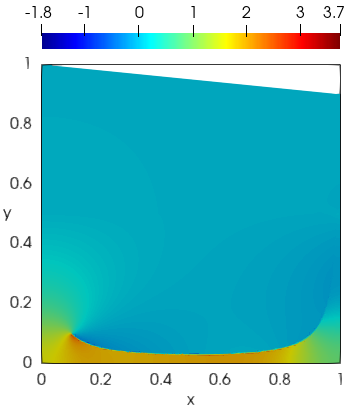}
\includegraphics[width=0.4\columnwidth]{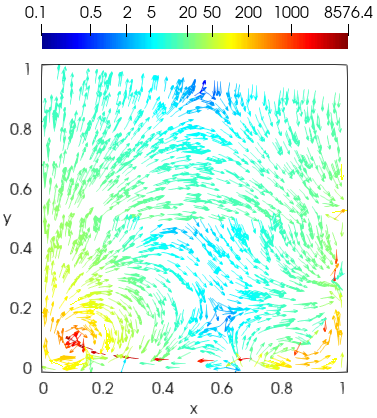}
\caption{Example II: Primal (top) and adjoint (bottom) pressure and velocity approximations on the final mesh.}
\end{figure}

  \begin{table}[t!]
{\footnotesize 
  \caption{Example II: Results employing the $BDM_1$ finite element space.}\label{tab:example_II_table}
  \begin{center}
   \begin{tabular}{|c|c|c|c|} \hline
    \bf Number of DOFs & \bf Error & \bf Est. Error & \bf $\theta_h$ \\ \hline
     $198$ &    $1.188\times 10^{-3}$ & $1.719\times 10^{-3}  $ & $0.691$ \\ 
     $764$ &    $4.773\times 10^{-4}$ & $4.534\times 10^{-4}$ & $1.053$ \\ 
     $3000$ &   $7.891\times 10^{-5} $ & $8.178\times 10^{-5} $ & $0.965$ \\ 
     $11888$ &  $1.255\times 10^{-5} $ & $1.294\times 10^{-5} $ & $0.970$ \\ 
     $47328$ &  $4.261\times 10^{-6} $ & $4.460\times 10^{-6} $ & $0.955$ \\ 
     $188864$ & $-2.694\times 10^{-7} $ & $-2.694\times 10^{-7} $ & $1.000$ \\ \hline
   \end{tabular}
  \end{center}
  }
  \end{table}

 The results presented in Table \ref{tab:example_II_table} again show that the proposed error estimate reliably predicts the size of the error, with effectivity indices close to unity on each of the meshes employed. Although it looks as if the trajectory is exiting the domain parallel to the boundary (cf. Figure \ref{TwoLayerPath}), the performance of the error estimator does not deteriorate in this setting.

 The adjoint solution approximations, pictured in Figures \ref{TwoLayerPlots}, are discontinuous along the particle trajectory. Indeed, close to $\mathbf{x}_0$ is a sink--like feature, with the adjoint velocity traveling backwards along the path to the initial point, while moving in the same direction as the path elsewhere. These may be interpretted as generalised Green's functions along the particle trajectory; in particular, the adjoint pressure looks to be bounded, while the adjoint velocity resembles more a Dirac--type measure.

% --- NEW SUBSECTION --- %

\subsection{Example III: Inspired by the Sellafield Site}
In this example, the domain $\Omega$ is defined as being the union of six sub--domains $\Omega_i$, $i = 1, 2, \dots, 6$, each representing a different type of rock. Each of these layers are assumed to have a given fixed, constant, porosity $\phi$ and permeability $\mathbf{k}$ related to the hydraulic conductivity $\mathbf{K}$ (cf. Sections \ref{DarcyContextSec} and \ref{modelsec}, respectively) by
$
\mathbf{K} = \nicefrac{\rho g}{\mu}\mathbf{k},
$
where $\rho$, $g$, and $\mu$ are the density of water, acceleration due to gravity, and kinematic velocity of water, respectively; the data for each of these is taken from \cite{cliffe_collis_houston}.

We briefly mention that the domain $\Omega$ is merely inspired by the geological units found at the Sellafield site and in no way is physically representative of it; therefore, we draw no conclusions of real-life consequence within this numerical example in the context of the post-closure safety assessments of potential radioactive waste burial sites. Furthermore, this experiment merely aims to reproduce similar results previously obtained in \cite{cliffe_collis_houston} in order to verify the main linearisation result presented in Theorem \ref{mainresult}. More details concerning this problem, as well as a more complex version of this test case, can be found in \cite{cliffe_collis_houston} where the permeability per layer was considered variable, but still constant per element.

\begin{figure}[t!]\label{Sellafield}
\centering
\includegraphics[width=0.65\columnwidth]{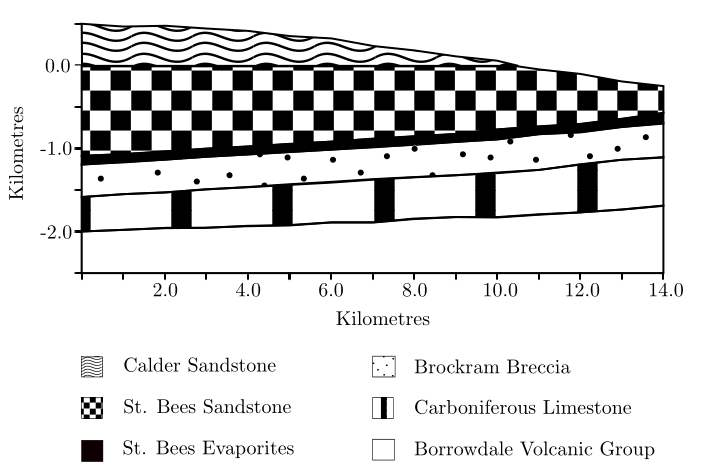}
\caption{Example III: The domain $\Omega$, inspired by Sellafield; see \cite{cliffe_collis_houston}.}
\end{figure}

\begin{figure}[t!]
\centering
\includegraphics[width=0.9\columnwidth]{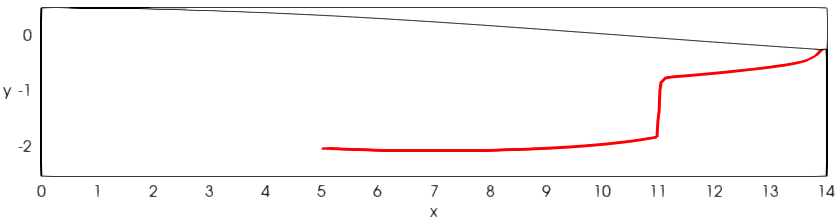}
\caption{Example III: Particle trajectory approximation on the initial mesh.}\label{SellafieldPath}
\end{figure}

 \begin{table}[t!]
{\footnotesize 
  \caption{Example III: Results employing the $BDM_1$ finite element space.}\label{tab:BDM_table}
  \begin{center}
   \begin{tabular}{|c|c|c|c|} \hline
    \bf Number of DOFs & \bf Error & \bf Est. Error & \bf Eff. Index \\ \hline
     $22871$ &               $-8.905\times 10^{-5} $ & $-5.970\times 10^{-5}  $ & $1.492$ \\ 
     $32624$ &               $-5.455\times 10^{-6}$ & $-4.421\times 10^{-6}$ & $1.234$ \\ 
     $47053$ &               $4.065\times 10^{-6} $ & $4.382\times 10^{-6} $ & $0.928$ \\ 
     $69887$ &               $-2.140\times 10^{-7} $ & $-2.206\times 10^{-7} $ & $0.970$ \\ 
     $1.0755\times 10^{5}$ & $-4.216\times 10^{-8} $ & $-4.326\times 10^{-8} $ & $0.974$ \\ 
     $1.6796\times 10^{5}$ & $-1.330\times 10^{-8} $ & $-1.468\times 10^{-8} $ & $0.906$ \\ 
     $2.6631\times 10^{5}$ & $-8.280\times 10^{-9} $ & $-8.280\times 10^{-9} $ & $1.000$ \\ \hline
   \end{tabular}
  \end{center}
  }
  \end{table}

Here, we let $\partial\Omega_D$ be the top of the domain, representing the surface of the site, and let $\partial\Omega_N$ be the remainder of the boundary, as pictured in Figure \ref{Sellafield}. We make the same assumptions as \cite{cliffe_collis_houston}:
the rock below the stratum consisting of Borrowdale Volcanic Group type is of much lower permeability than all of the other layers; there is a flow divide on the left and right edges of the domain; the pressure at the top of the domain is prescribed via $g_D = p_{\text{atm}}/\rho g + y$ where $p_{\text{atm}} \approx 1.013\times 10^5 Pa$ is atmospheric pressure; the source term $f$ is set equal to zero. The travel time path computed on the initial mesh is depicted in Figure~\ref{SellafieldPath}.

\begin{figure}[t!]
\centering
\includegraphics[width=1\columnwidth]{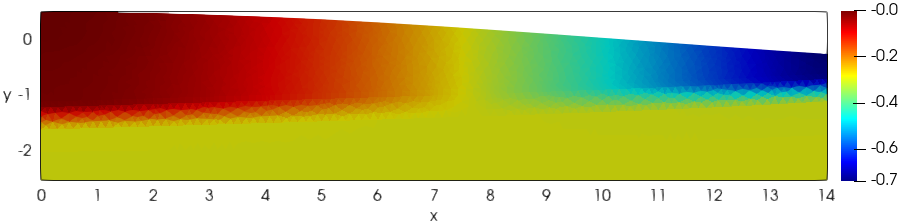}
\caption{Example III: Pressure approximation on the initial mesh.}\label{SellafieldPressure}
\end{figure}

\begin{figure}[t!]
\centering
\includegraphics[width=1\columnwidth]{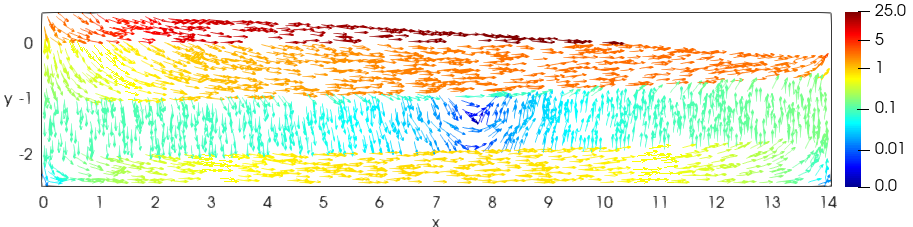}
\caption{Example III: Velocity approximation on the initial mesh.}\label{SellafieldVelocity}
\end{figure}
\begin{figure}[t!]
\centering
\includegraphics[width=1\columnwidth]{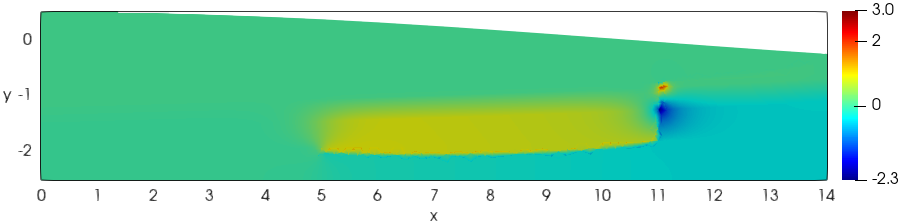}
\caption{Example III: Adjoint pressure approximation on the initial mesh.}\label{SellafieldAdjointPressure}
\end{figure}
\begin{figure}[t!]
\centering
\includegraphics[width=1\columnwidth]{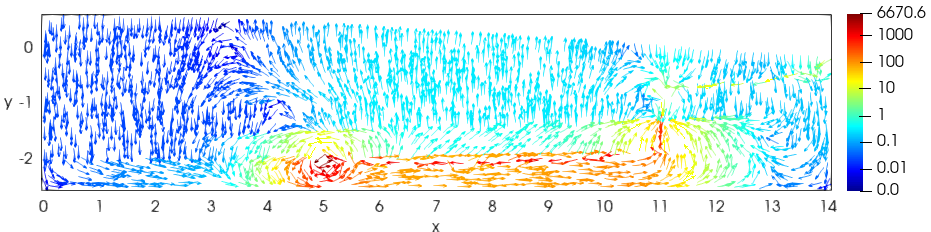}
\caption{Example III: Adjoint velocity approximation on the initial mesh.}\label{SellafieldAdjointVelocity}
\end{figure}

\begin{figure}[t!]
\centering
\includegraphics[width=0.9\columnwidth]{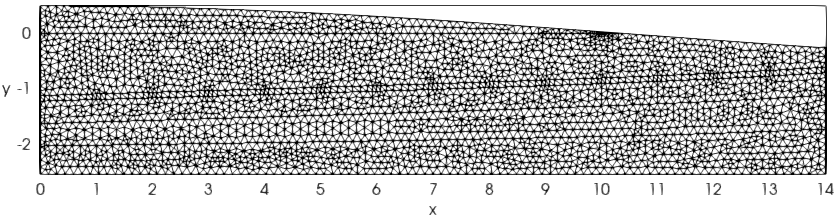}\\
\includegraphics[width=0.9\columnwidth]{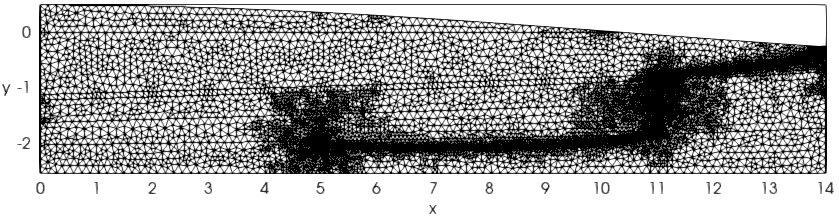}
\caption{Example III: Initial and final adaptively refined meshes.}\label{SellafieldMeshes}
\end{figure}

In Table \ref{tab:BDM_table} we present the performance of the adaptive routine when approximating the travel time functional. The exact travel time $\mathfrak{T}(\mathbf{u}; \mathbf{x}_0)$ is based on the approximation computed on the final mesh and the computed error estimator; on this basis the exact travel time is approximately $0.49\times 10^{5}$ years. We can see from these results that the effectivity indices computed on all meshes are close to unity, indicating that the approximate error estimate (\ref{ExpEst}) leads to reliable error estimation, similar to the previously undertaken work in \cite{cliffe_collis_houston}. We see that for this physically motivated example we are able to estimate the error in the travel time functional very closely.

Figures \ref{SellafieldPressure} and \ref{SellafieldVelocity} show the computed approximations $(\mathbf{u}_h, p_h)\in\mathbf{H}_{h, 0}$ on the initial mesh. Again, here we observe discontinuities in the Darcy velocity across the rock layer interfaces, with the velocities differing by orders of magnitude within each of the stratum. We also see a local stationary point in the pressure near the centre of the domain which accounts for the change in direction of the groundwater flow; indeed, in this region the flow moves upwards and thus could transport the buried nuclear waste back up to the surface of the site.

Figures \ref{SellafieldAdjointPressure} and \ref{SellafieldAdjointVelocity} plot the computed adjoint approximations $(\mathbf{z}_h, r_h)\in\mathbf{H}_{h, 1}$. As concurred by \cite{cliffe_collis_houston} we see a strong discontinuity along the direction of the trajectory $\mathbf{X}_{\mathbf{u}_{h}}$, and with both the adjoint velocity and pressure approximations vanishing away from the path. Close to the initial release point $\mathbf{x}_0$ we see what looks to be a sink--like feature in the adjoint velocity approximation, and again, in agreement with \cite{cliffe_collis_houston}, this velocity points in the same direction as the primal Darcy velocity (approximation) outside of the path, but in the opposite direction along $\mathbf{X}_{\mathbf{u}_{h}}$.

Finally, in Figure \ref{SellafieldMeshes} we show the initial mesh and the final, adaptively refined, mesh. As expected, we observe mesh refinement taking place around the initial point $\mathbf{x}_0$, at the exit point, and along the trajectory itself. There is more significant refinement (compared with the rest of the path) where the trajectory changes direction; in these regions there are sharp discontinuities in the Darcy velocity approximation, which may lead to a large discretisation error of the primal Darcy problem. Such large errors contribute greatly to the error induced in the travel time functional and as such, is targetted more for refinement when compared with the regions containing long horizontal stretches of the trajectory; typically here, the velocity (especially when confined to a single rock layer) appears to be quite smooth.

% --- NEW SECTION --- %

\section{Conclusions}\label{ConclusionsSec}
This work has been concerned with the numerical approximation of the travel time functional in porous media flows and the post--closure safety assessment of radioactive waste storage facilities. An expression for the Gâteaux derivative of the travel time functional has been derived, for both continuous and piecewise--continuous velocity fields, which was then utilised via the dual--weighted--residual--method for goal--oriented error estimation and mesh adaptivity. Numerical experiments considering both simple and complicated problem set--ups were considered, validating the proposed error estimate which performed extremely well, in terms of the computed effectivity indices being very close to unity on all meshes employed. The contributions of this research have built upon those in \cite{cliffe_collis_houston} where previously such an expression for the Gâteaux derivative was unavailable.

Extensions of this work may, for example, involve considering more realistic conditions in order to test the proposed error estimate. More demanding domains, such as fractured porous media or domains with inclusions such as vugs or caves, is vital to extend the results from these simple academic test cases to real--life applications. Furthermore, a closer look into the regularity of the adjoint solutions would be extremely beneficial in understanding how to improve the error estimate to derive a guaranteed bound and to better understand the expected rates of convergence in the error of the computed travel time functional. Indeed, the well--posedness of the adjoint problem still remains an open question.

% --- END --- %

\bibliographystyle{siamplain}
\bibliography{TT_References}

\end{document}